\documentclass{article}
\usepackage{arxiv}
\usepackage{amssymb}
\usepackage[utf8]{inputenc} 
\usepackage[T1]{fontenc}    
\usepackage{hyperref}       
\usepackage{url}            
\usepackage{booktabs}       
\usepackage{amsfonts}       
\usepackage{nicefrac}       
\usepackage{microtype}      
\usepackage{graphicx}
\usepackage{natbib}
\usepackage{doi}
\usepackage{amsmath}
\usepackage{physics}
\usepackage{wrapfig}
\usepackage{bm}
\usepackage{mathrsfs}

\usepackage[linesnumbered,ruled]{algorithm2e}
\usepackage[noend]{algpseudocode}
\usepackage{xcolor}
\usepackage{forest}
\usepackage{tcolorbox}
\usepackage{tikz-cd}
\usepackage{enumitem}

\newenvironment{proof}{\paragraph{Proof:}}{\hfill$\square$}

\def\Xint#1{\mathchoice
{\XXint\displaystyle\textstyle{#1}}%
{\XXint\textstyle\scriptstyle{#1}}%
{\XXint\scriptstyle\scriptscriptstyle{#1}}%
{\XXint\scriptscriptstyle\scriptscriptstyle{#1}}%
\!\int}
\def\XXint#1#2#3{{\setbox0=\hbox{$#1{#2#3}{\int}$ }
\vcenter{\hbox{$#2#3$ }}\kern-.6\wd0}}

\def\fint{\Xint-}

\newtheorem{theorem}{Theorem}
\newtheorem{proposition}[theorem]{Proposition}
\newtheorem{lemma}[theorem]{Lemma}

\newtheorem{assumption}[theorem]{Assumption}
\newtheorem{remark}[theorem]{Remark}

\title{Generalised higher order vectorial $\infty$-eigenvalue problems}
\date{} 					

\author{ \href{https://orcid.org/0000-0003-1700-756X}{William Chang} \\
	Department of Applied Mathematics\\
	University of California, Los Angeles\\
	Los Angeles, CA \\
	USA\\
	\url{https://williamc.me/} \\
	\And
	\href{http://orcid.org/0000-0001-5292-270X}{Nikos Katzourakis} \\
	Department of Mathematics and Statistics\\
	University of Reading, Reading\\
	Whiteknights Campus\\
          Pepper Lane, RG6 6AX\\
	 UK \\
}

\renewcommand{\headeright}{}
\renewcommand{\undertitle}{}

\hypersetup{
pdftitle={Generalised higher order vectorial infinity-eigenvalue problems},
pdfsubject={Calculus of Variations in L-infinity},
pdfauthor={William Chang},
pdfkeywords={Calculus of Variations in L-infinity, infinity-eigenvalue problem, higher-order problems, Lagrange multipliers},
}

\begin{document}
\maketitle
\begin{abstract}
We study the problem of minimising the $L^\infty$ norm of a function of the $k$-th derivative over a class of maps, subject to a constraint involving the $L^\infty$ norm of a function of the map and its lower-order derivatives, for any integer $k\ge 2$. We impose boundary conditions corresponding to the $k$-th order analogues of the classical ``clamped'' and ``hinged'' cases. By employing the method of $L^p$ approximations, we establish the existence of a special $L^\infty$ minimiser, which solves a divergence PDE system with measure coefficients as parameters. This system constitutes the counterpart of the Aronsson--Euler equations for the constrained variational problem under consideration. Furthermore, we establish a lower bound for the eigenvalue. The present work extends the second-order vectorial results of Clark and Katzourakis \cite{clark2024generalized} to the general higher-order setting.
\end{abstract}

\medskip

\thanks{\it N.K.\ has been partially financially supported through the EPSRC grant EP/X017109/1.}

\medskip

\noindent {\bf MSC 2020} 35D30, 35D40, 35J47, 35J92, 35J70, 35J99, 35P30.

\keywords{$\infty$-Eigenvalue problems; higher-order nonlinear eigenvalue problems; $\infty$-Laplacian; $L^\infty$ functionals; Absolute minimisers; Calculus of Variations in $L^\infty$; Lagrange Multipliers.}

\section{Introduction}

Let $n,N\in\mathbb{N}$ with $n\ge2$, and let $\Omega\subseteq \mathbb{R}^n$ be a bounded domain with Lipschitz boundary $\partial\Omega$. In this work we study nonlinear higher-order eigenvalue problems arising in the calculus of variations in $L^\infty$. Our aim is to extend recent results on second-order vectorial $\infty$-eigenvalue problems obtained in \cite{clark2024generalized} to arbitrary order.

More precisely, for an integer $k\ge2$ we consider the constrained supremal minimisation problem
\begin{equation}\label{minimisation problem}
\|f(D^k u_\infty)\|_{L^\infty(\Omega)}
=
\inf \Big\{
\|f(D^k v)\|_{L^\infty(\Omega)} :
v \in W_B^{k,\infty}(\Omega;\mathbb{R}^N),
\ \|g(v,Dv,\ldots,D^{k-1}v)\|_{L^\infty(\Omega)} = 1
\Big\}.
\end{equation}

Our objective is to derive a necessary optimality condition for minimisers of \eqref{minimisation problem}. As is typical in supremal variational problems, the resulting condition takes the form of a nonlinear PDE system satisfied by the minimiser. However, as this is not a problem involving conventional integral functionals, the situation is considerably different. 

The main difficulty is that functionals defined through the $L^\infty$ norm are not G\^ateaux differentiable. Consequently, classical variational arguments leading to Euler--Lagrange equations cannot be applied directly. Instead, we analyse a family of auxiliary $L^p$ problems and pass to the limit as $p\to\infty$. Even though it would have been possible to solve the constrained $L^\infty$ minimisation problem \eqref{minimisation problem} directly, the passage through $L^p$ approximations is an essential technical device in order to bypass the lack of regularity of supremal functionals and obtain the relevant extremality conditions.

The admissible class $W_B^{k,\infty}(\Omega;\mathbb{R}^N)$ appearing in \eqref{minimisation problem} depends on the boundary conditions imposed on the maps. We consider the two natural $k$-th order analogues of classical clamped and hinged boundary conditions:
\begin{align}
W_C^{k,\infty}(\Omega;\mathbb{R}^N)
&:= W_0^{k,\infty}(\Omega;\mathbb{R}^N),\\
W_H^{k,\infty}(\Omega;\mathbb{R}^N)
&:= W^{k,\infty}(\Omega;\mathbb{R}^N)\cap W_0^{k-1,\infty}(\Omega;\mathbb{R}^N).
\end{align}

Functions in the clamped space satisfy
\[
u = Du = D^2u = \cdots = D^{k-1}u = 0 \quad \text{on } \partial\Omega,
\]
while functions in the hinged space satisfy
\[
u = Du = \cdots = D^{k-2}u = 0 \quad \text{on } \partial\Omega,
\]
in the trace sense. These correspond to higher-order Dirichlet-type boundary conditions.

\paragraph{Background.}
Problem \eqref{minimisation problem} lies within the framework of the calculus of variations in $L^\infty$, a theory originating in the work of Aronsson in the 1960s \cite{aronsson1967extension,aronson1968non}. Since then the subject has developed substantially, with connections to fully nonlinear PDE and optimisation problems; see for instance \cite{aronsson2004tour,juutinen2001,katzourakis2015}.

Supremal variational problems exhibit several structural differences from classical integral functionals. In particular, the $L^\infty$ norm lacks differentiability properties required for standard Euler--Lagrange theory. Moreover, the space $L^\infty$ is neither reflexive nor separable, which complicates compactness and localisation arguments. These features make the analysis of such problems significantly more delicate.

Higher-order and constrained $L^\infty$ variational problems have received comparatively little attention. Existing works include \cite{dalMaso2004higher,katzourakis2013,barron2005,clark2021data,clark2024isosupremic}. The present paper can be viewed as a higher-order extension of the framework developed in \cite{clark2024isosupremic}, which itself generalised the existence and PDE characterisation results of \cite{clark2021data} for second-order systems. Those works in turn build upon earlier studies of scalar $\infty$-eigenvalue problems for the $\infty$-Laplacian \cite{juutinen1999,juutinen2005,juutinen2001}. Further developments and applications may be found in \cite{katzourakis2026minimisation,katzourakis2020minimisation}.

\paragraph{Approximation approach.}
Due to the vectorial and higher-order structure of \eqref{minimisation problem}, techniques based on viscosity solutions are not applicable; see \cite{katzourakis2015} for an overview of that theory. Instead we employ an approximation strategy based on $L^p$ variational problems.

The underlying idea is that, on sets of finite measure, the $L^p$ norm of a given $L^\infty$ function converges to its $L^\infty$ norm, as $p\to\infty$. By analysing minimisers of suitable $L^p$ problems and establishing uniform estimates, we obtain limiting objects that capture the structure of the supremal problem. For vectorial higher-order problems this approach is currently the main available method. Even the intrinsic duality theory of Bungert and Korolev \cite{bungert2022} applies only in the scalar first-order setting.

\paragraph{Structural assumptions.}
Throughout the paper we impose the following hypotheses on the supremand $f$ and the constraint function $g$ (for the notation used throughout this paper, we refer to Section \ref{sec:preliminary}).

\begin{assumption}\label{f}
The function \(f\) satisfies:
\begin{enumerate}
    \item \(f \in C^1\big(\mathbb{R}_s^{N\times n^k}\big)\).
    
    \item\label{morrey} \(f\) is (Morrey) \(k\)-quasiconvex, that is,
    \[
    f(X)
    \leq
    \fint_\Omega f\big(X+D^k\phi(x)\big)\,d\mathcal{L}^n(x),
    \]
    for every \(X\in \mathbb{R}_s^{N\times n^k}\) and every
    \(\phi\in W_0^{k,\infty}(\Omega;\mathbb{R}^N)\).

    \item There exist constants \(0<C_1\le C_2\) such that, for every
    \(X\in\mathbb{R}_s^{N\times n^k}\setminus\{0\}\),
    \begin{equation}\label{1.3c}
        0<C_1 f(X)\le \partial f(X):X \le C_2 f(X).
    \end{equation}

    \item There exist constants \(C_3,\ldots,C_6>0\), \(\alpha>1\), and \(\beta\le1\) such that, for every
    \(X\in\mathbb{R}_s^{N\times n^k}\),
    \begin{align}
        -C_3 + C_4|X|^\alpha
        &\le f(X)\le C_5|X|^\alpha + C_6,\label{growth-a}\\
        |\partial f(X)|
        &\le C_5 f(X)^\beta + C_6.\label{growth-b}
    \end{align}
\end{enumerate}
\end{assumption}

Next we introduce the assumptions on the constraint function \(g\).

\begin{assumption}\label{g}
The function \(g\) satisfies:
\begin{enumerate}
    \item
    \[
    g\in C^1\!\bigg(\mathbb{R}^N\times\prod_{j=1}^{k-1}\mathbb{R}^{N\times n^j}\bigg).
    \]

    \item\label{coercive}
    \(g\) is coercive, in the sense that
    \[
    \lim_{t\to\infty}
    \left(
    \inf_{\substack{(\eta,P_1,\ldots,P_{k-1})\in \mathbb{R}^N\times\prod_{j=1}^{k-1}\mathbb{R}^{N\times n^j}\\
    |(\eta,P_1,\ldots,P_{k-1})|=1}}
    g(t\eta,tP_1,\ldots,tP_{k-1})
    \right)
    =\infty.
    \]

    \item\label{partial}
    There exist constants \(0<C_7\le C_8\) such that, for every
    \[
    (\eta,P_1,\ldots,P_{k-1})\in
    \bigg(\mathbb{R}^N\times\prod_{j=1}^{k-1}\mathbb{R}^{N\times n^j}\bigg)\setminus\{0\},
    \]
    one has
    \[
    0<C_7 g(\eta,P_1,\ldots,P_{k-1})
    \le
    \partial_\eta g\cdot\eta
    +\sum_{j=1}^{k-1}\partial_{P_j}g:P_j
    \le
    C_8 g(\eta,P_1,\ldots,P_{k-1}).
    \]
\end{enumerate}
\end{assumption}

These conditions guarantee appropriate regularity, growth, and coercivity properties required for the analysis of the constrained supremal problem \eqref{minimisation problem}.

\section{Preliminaries}\label{sec:preliminary}

In this section we record the notation and structural assumptions used throughout the paper.

\paragraph{Derivative notation.}
Let \(u \in C^{k}(\Omega)\) be scalar-valued. For any integer \(m\) with \(1\le m\le k\), its \(m\)-th derivative at a point \(x\in\Omega\) is the symmetric \(m\)-th order tensor
\[
D^mu(x)=\big(D^m_{i_1\cdots i_m}u(x)\big)_{i_1,\ldots,i_m=1}^n \in \mathbb{R}^{n^m}_s,
\]
which acts as an \(m\)-linear form on \((\mathbb{R}^n)^m\) by
\[
D^mu(x)[v_1,\ldots,v_m]
:=
\sum_{i_1,\ldots,i_m=1}^n D^m_{i_1\cdots i_m}u(x)\,v_1^{i_1}\cdots v_m^{i_m}.
\]

Now let \(u=(u^1,\dots,u^N):\Omega\to\mathbb{R}^N\) be a vector-valued map of class \(C^k\). Then
\[
D^ku(x)\in \mathbb{R}_s^{N\times n^k},
\]
with components
\[
\big(D^ku(x)\big)^\ell_{i_1\cdots i_k}:=D^k_{i_1\cdots i_k}u^\ell(x),
\qquad
\ell=1,\dots,N,\quad i_1,\ldots,i_k=1,\dots,n.
\]
Equivalently, \(D^ku(x)\) may be regarded as a \(k\)-linear map
\[
D^ku(x):(\mathbb{R}^n)^k\to\mathbb{R}^N
\]
defined component-wise by
\[
\big(D^ku(x)[v_1,\ldots,v_k]\big)^\ell
=
\sum_{i_1,\ldots,i_k=1}^n D^k_{i_1\cdots i_k}u^\ell(x)\,v_1^{i_1}\cdots v_k^{i_k}.
\]

\paragraph{Inner products.}
For tensors of the same order, we write \(A:B\) for the Euclidean inner product. For example, if
\(A,B\in\mathbb{R}^{N\times n^m}\) for some integer \(m\ge1\), then \(A:B\) denotes the component-wise Euclidean contraction over all indices:
\[
A:B = \sum_{\ell=1}^N\sum_{i_1,\ldots,i_m=1}^n A^\ell_{i_1\cdots i_m} B^\ell_{i_1\cdots i_m}.
\]

\paragraph{Function spaces.}
We use the following notation:
\begin{align}
C_{\mathrm B}^k(\overline{\Omega};\mathbb{R}^N)
&:=
C^k(\overline{\Omega};\mathbb{R}^N)\cap W_{\mathrm B}^{k,\infty}(\Omega;\mathbb{R}^N),\\
W_{\mathrm C}^{k,p}(\Omega;\mathbb{R}^N)
&:=
W_0^{k,p}(\Omega;\mathbb{R}^N),
\qquad p\in[1,\infty),\\
W_{\mathrm H}^{k,p}(\Omega;\mathbb{R}^N)
&:=
W^{k,p}(\Omega;\mathbb{R}^N)\cap W_0^{k-1,p}(\Omega;\mathbb{R}^N),
\qquad p\in[1,\infty).
\end{align}
Here, as in the introduction, \(W_{\mathrm B}^{k,p}\) denotes either the clamped space \(W_{\mathrm C}^{k,p}\) or the hinged space \(W_{\mathrm H}^{k,p}\).

We may now state the main result.

\begin{theorem}[Main result]\label{main theorem}
Let \(k\ge 2\) be an integer. Assume that \ref{f} and \ref{g} hold. Then the following conclusions are valid.

\medskip
\noindent\textnormal{(A) Existence of a minimiser.}
The constrained \(L^\infty\) minimisation problem \eqref{minimisation problem} admits a solution
\[
u_\infty \in W_{\mathrm B}^{k,\infty}(\Omega;\mathbb{R}^N).
\]

\medskip
\noindent\textnormal{(B) Limiting Euler--Lagrange structure.}
There exist Radon measures
\[
\mathrm{M}_\infty \in \mathcal{M}\big(\overline{\Omega};\mathbb{R}_s^{N\times n^k}\big),
\qquad
\nu_\infty \in \mathcal{M}(\overline{\Omega}),
\]
such that
\begin{equation}\label{limiting PDE}
\int_{\overline{\Omega}} D^k\phi : d\mathrm{M}_\infty
=
\Lambda_\infty
\int_{\overline{\Omega}}
\bigg(
\partial_\eta g_\infty\cdot \phi
+
\sum_{j=1}^{k-1}\partial_{P_j} g_\infty:D^j\phi
\bigg)\,d\nu_\infty,
\end{equation}
for every test map \(\phi\in C_{\mathrm B}^k(\overline{\Omega};\mathbb{R}^N)\). For the sake of brevity, in the above we have symbolised
\[
g_\infty \equiv  g(u_\infty,Du_\infty,\ldots,D^{k-1}u_\infty),
\]
and similarly for its partial derivatives, and
\[
\Lambda_\infty := \|f(D^ku_\infty)\|_{L^\infty(\Omega)} > 0
\]
is the associated \(L^\infty\)-eigenvalue.

\medskip
Moreover, \(\Lambda_\infty\) satisfies the a priori lower bound
\[
\Lambda_\infty \geq \left( C_4  \Bigg\{\displaystyle\sum_{j=0}^{k-1}
 \bigg( \prod_{l=j}^{k-1} C_{l}(\Omega) \bigg)\|\partial_{P_j} g\|_{L^\infty(\{g\le1\})} \Bigg\}^{-\alpha}
-C_3
\right)^+,
\]
where we have used the notation \(\partial_{P_0}g \equiv \partial_\eta g\), and \((\cdot)^+\) denotes the positive part. 

In the above estimate, \(C_0(\Omega),\ldots,C_{k-2}(\Omega)\) denote constants arising from the associated Poincar\'e inequality applied to the derivatives of corresponding order, and $C_{k-1}(\Omega)$ denotes the constant arising from either the Poincar\'e, or the Poincar\'e-Wirtinger inequality applied to the derivatives of $(k-1)$-order, depending on the type of boundary data (clamped/hinged).

\medskip
\noindent\textnormal{(C) Approximation by \(L^p\) problems.}
There exists a sequence \((p_j)_{j\ge1}\subseteq (n/\alpha,\infty)\) with \(p_j\to\infty\) such that, for each \(p=p_j\), there is a quadruple
\[
(u_p,\Lambda_p,\mathrm{M}_p,\nu_p)
\in
W_{\mathrm B}^{k,\alpha p}(\Omega;\mathbb{R}^N)
\times [0,\infty)
\times \mathcal{M}\big(\overline{\Omega};\mathbb{R}_s^{N\times n^k}\big)
\times \mathcal{M}(\overline{\Omega})
\]
satisfying
\[
\begin{cases}
u_p \to u_\infty & \text{in } C^{k-1}(\overline{\Omega};\mathbb{R}^N),\\
D^ku_p \rightharpoonup D^ku_\infty & \text{in } L^q(\Omega;\mathbb{R}_s^{N\times n^k}) \quad \forall q\in(1,\infty),\\
\Lambda_p \to \Lambda_\infty,\\
\mathrm{M}_p \stackrel{*}{\rightharpoonup} \mathrm{M}_\infty,\\
\nu_p \stackrel{*}{\rightharpoonup} \nu_\infty,
\end{cases}
\]
as $p\to\infty$, along the sequence. Further, each \(u_p\) is a minimiser of the \(L^p\)-constrained problem
\[
\|f(D^ku_p)\|_{L^p(\Omega)}
=
\inf\Big\{
\|f(D^kv)\|_{L^p(\Omega)} :
v\in W_{\mathrm B}^{k,\alpha p}(\Omega;\mathbb{R}^N),
\ \|g(v,Dv,\ldots,D^{k-1}v)\|_{L^p(\Omega)}=1
\Big\},
\]
and the pair \((u_p,\Lambda_p)\) satisfies the corresponding weak Euler--Lagrange system.

Finally, the associated measures are given by
\begin{align}
\mathrm{M}_p
&=
\frac{1}{\mathcal{L}^n(\Omega)}
\left(\frac{f(D^ku_p)}{\Lambda_p}\right)^{p-1}
\partial f(D^ku_p)\,
\mathcal{L}^n\!\llcorner \Omega,
\\
\nu_p
&=
\frac{1}{\mathcal{L}^n(\Omega)}
g\big(u_p,Du_p,\ldots,D^{k-1}u_p\big)^{p-1}\,
\mathcal{L}^n\!\llcorner \Omega.
\label{1.12}
\end{align}
\end{theorem}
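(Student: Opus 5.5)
We argue by $L^p$ approximation, following the scheme of \cite{clark2024generalized}, carried out at order $k$. The argument has four steps: solve the constrained $L^p$ problems, derive their Euler--Lagrange systems, pass to the limit $p\to\infty$, and prove the eigenvalue lower bound.

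\textbf{Step 1: existence for the $L^p$ problems.} Fix $p>n/\alpha$ and minimise $\|f(D^kv)\|_{L^p}$ over the admissible class $\mathcal{A}_p=\{v\in W^{k,\alpha p}_{\mathrm B}:\|g(v,\dots,D^{k-1}v)\|_{L^p}=1\}$. This class is non-empty: take any nonzero $\psi\in C^\infty_c$ and use the coercivity and the monotonicity in Assumption \ref{g}(\ref{partial}), which makes $t\mapsto g(t\cdot)$ strictly increasing, to rescale $\psi$ so the constraint equals $1$.
\begin{itemize}[label={}]
\item \emph{Coercivity.} By \eqref{growth-a}, $\|D^kv\|_{L^{\alpha p}}^\alpha\lesssim \|f(D^kv)\|_{L^p}+C$. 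The Poincar\'e inequalities, iterated down the orders (with Poincar\'e--Wirtinger at order $k-1$ in the hinged case, where $D^{k-1}v$ need not vanish on $\partial\Omega$), then bound $\|v\|_{W^{k,\alpha p}}$.
\item \emph{Passing to the limit.} Take a minimising sequence. Weak compactness gives weak convergence in $W^{k,\alpha p}$, and Morrey's embedding ($\alpha p>n$) gives strong convergence in $C^{k-1}(\overline\Omega)$. The constraint therefore passes to the limit. Lower semicontinuity of $v\mapsto\int f(D^kv)^p$ follows from $k$-quasiconvexity of $f^p$: since $t\mapsto t^p$ is convex and nondecreasing on $[0,\infty)$, Jensen transfers quasiconvexity from $f$ to $f^p$ once $f\ge0$. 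Nonnegativity of $f$ comes from \eqref{1.3c} and $f(0)=0$, obtained by integrating along rays $t\mapsto f(tX)$.
\item We invoke the higher-order Acerbi--Fusco/Meyers lower semicontinuity theorem for $k$-quasiconvex integrands with $p$-growth.
\end{itemize}

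\textbf{Step 2: Euler--Lagrange system for $u_p$.} For the Euler--Lagrange system we use a Lagrange multiplier. The constraint functional is $C^1$ on $W^{k,\alpha p}_{\mathrm B}$, by the embedding into $C^{k-1}$. Its derivative in the direction $u_p$ itself equals $\int g^{p-1}\big(\partial_\eta g\cdot u_p+\sum_j\partial_{P_j}g:D^ju_p\big)\ge C_7\int g^p>0$, so the constraint is non-degenerate. The same identity, tested against the equation together with \eqref{1.3c}, identifies the multiplier. After normalising by $\Lambda_p=\|f(D^ku_p)\|_{L^p}$ and the factor $\mathcal{L}^n(\Omega)$, we obtain
\[
\int D^k\phi:d\mathrm M_p=\lambda_p\int\Big(\partial_\eta g\cdot\phi+\sum_j\partial_{P_j}g:D^j\phi\Big)\,d\nu_p,
\]
with $\mathrm M_p,\nu_p$ as in the statement. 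The factor $\lambda_p$ lies in $[C_1/C_8,\,C_2/C_7]\,\Lambda_p$ up to normalisation. Its exact form must be tracked so that in the limit it becomes $\Lambda_\infty$. This may require absorbing constant factors into $\nu_p$; tracking them is the main bookkeeping risk.

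\textbf{Step 3: limit $p\to\infty$.}
\begin{itemize}[label={}]
\item \emph{Uniform bounds on $u_p$.} Fix any $v\in W^{k,\infty}_{\mathrm B}$ with $\|g(v,\dots)\|_{L^\infty}=1$. Rescale it by $t_p\to1$ so that $\|g\|_{L^p}=1$; this gives $\Lambda_p\le\|f(D^kv)\|_{L^\infty}+o(1)$. Hölder then gives uniform bounds of $D^ku_p$ in $L^{\alpha q}$ for each fixed $q$ once $p\ge q$.
\item \emph{Convergence of $u_p$.} A diagonal argument yields $u_p\to u_\infty$ in $C^{k-1}$ and $D^ku_p\rightharpoonup D^ku_\infty$ weakly in every $L^q$.
\item \emph{Identifying the limit.} Lower semicontinuity plus $\|\cdot\|_{L^q}\to\|\cdot\|_{L^\infty}$ gives $\|f(D^ku_\infty)\|_\infty\le\liminf\Lambda_p\le\inf$. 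The constraint passes to the limit: $\|g_p\|_{L^p}=1$ together with uniform convergence gives $\|g_\infty\|_{L^\infty}=1$. Hence $u_\infty$ is a minimiser and $\Lambda_p\to\Lambda_\infty$, proving (A) and the first parts of (C).
\item \emph{Mass bounds on the measures.} Hölder gives $\|\mathrm M_p\|\lesssim \fint (f/\Lambda_p)^{p-1}(C_5f^\beta+C_6)$. This is bounded because $(f/\Lambda_p)^{p-1}$ has $L^{p/(p-1)}$-average at most $1$ and $\beta\le1$. Similarly $\nu_p(\overline\Omega)\le1$.
\item \emph{Passing to the limit in the equation.} Weak-* compactness gives $\mathrm M_\infty,\nu_\infty$. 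On the right-hand side, $\partial g(u_p,\dots)\to\partial g_\infty$ uniformly, so the products converge against $\nu_p\stackrel{*}{\rightharpoonup}\nu_\infty$, giving \eqref{limiting PDE}.
\end{itemize}
The delicate points are (i) the case $\Lambda_\infty$ close to zero, where dividing by $\Lambda_p$ requires $\Lambda_p$ bounded away from $0$, and (ii) the $\beta\le1$ absorption. The lower bound on $\Lambda_\infty$ handles (i) when it is positive. Otherwise $\Lambda_p>0$ anyway, since $u_p\ne0$ and $f(X)>0$ for $X\neq0$ by \eqref{1.3c}, and we use $\Lambda_\infty>0$ likewise.

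\textbf{Step 4: the lower bound on $\Lambda_\infty$.} Since $g(0)=0$ and $\|g_\infty\|_\infty=1$, the mean value theorem along the segment from $0$, which stays inside $\{g\le1\}$ by monotonicity along rays, gives
\[
1\le\sum_j\|\partial_{P_j}g\|_{L^\infty(\{g\le1\})}\,\|D^ju_\infty\|_{\infty}.
\]
Iterating Poincar\'e, $\|D^ju_\infty\|_\infty\le\prod_{l=j}^{k-1}C_l(\Omega)\|D^ku_\infty\|_\infty$. This yields $\|D^ku_\infty\|_\infty\ge\{\cdots\}^{-1}$, and \eqref{growth-a} converts this into the stated bound for $\Lambda_\infty$.
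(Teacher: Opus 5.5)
Your overall route is the paper's: rescaling with $t_p\to1$, existence via $k$-quasiconvexity of $f^p$, Poincar\'e/Poincar\'e--Wirtinger and Morrey, a Lagrange multiplier, diagonal compactness, H\"older mass bounds, and the ray argument with iterated Poincar\'e for the lower bound. However, the normalisation you choose in Step~2 does not deliver (B) and (C) as stated.

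You put $\Lambda_p:=\|f(D^ku_p)\|_{L^p(\Omega)}=:L_p$ into $\mathrm{M}_p$. Let $\lambda_p$ be the raw multiplier in \eqref{1.11}. Dividing \eqref{1.11} by $L_p^{p-1}$ gives $\int D^k\phi:d\mathrm{M}_p=c_pL_p\int\big(\partial_\eta g_p\cdot\phi+\sum_j\partial_{P_j}g_p:D^j\phi\big)\,d\nu_p$, where $c_p:=\lambda_p/L_p^p\in[C_1/C_8,\,C_2/C_7]$. Nothing forces $c_p\to1$. For $f(X)=|X|^\alpha$ and $g=|(\eta,P_1,\ldots,P_{k-1})|^\gamma$ with $\gamma>1$, testing with $\phi=u_p$ gives $c_p=\alpha/\gamma$ exactly. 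So the limit you obtain is \eqref{limiting PDE} with $\Lambda_\infty$ replaced by $(\alpha/\gamma)\Lambda_\infty$. Your suggested remedy of absorbing the constant into $\nu_p$ is not available: $\nu_p$ is prescribed by \eqref{1.12}, and it must converge weakly$^*$ to the same $\nu_\infty$ that appears in \eqref{limiting PDE}.

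The missing idea is to hide the constant under a $p$-th root, as in Lemma~\ref{lemma5}. Testing \eqref{1.11} with $\phi=u_p$ and using \eqref{1.3c} and Assumption~\ref{g}\,\ref{partial} gives $\frac{C_1}{C_8}L_p^p\le\lambda_p\le\frac{C_2}{C_7}L_p^p$. In particular $\lambda_p>0$, so you may define $\Lambda_p:=\lambda_p^{1/p}$. Then $\lambda_p/\Lambda_p^{p-1}=\Lambda_p$, and dividing \eqref{1.11} by $\Lambda_p^{p-1}$ yields the measure form of the Euler--Lagrange system with exactly $\Lambda_p$ in front. Moreover, $(C_1/C_8)^{1/p}L_p\le\Lambda_p\le(C_2/C_7)^{1/p}L_p$, and your Step~3 does prove $L_p\to\Lambda_\infty$, so $\Lambda_p\to\Lambda_\infty$: the constants disappear under the $p$-th root. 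In the mass bound for $\mathrm{M}_p$ you then use $(L_p/\Lambda_p)^{p-1}\le(C_8/C_1)^{(p-1)/p}$ in place of $1$. The rest of Steps~3 and~4 goes through unchanged.
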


\section{Proof of the main result}

In this section we establish our main result, as a consequence of a series of lemmas and propositions.

The first lemma below verifies that there exists at least one function in the admissible set. It ensures that if you start with $v$ in the admissible class of the limiting problem, then $v$ remains in the admissible class after appropriate normalisation.
\begin{lemma}\label{lemma2}
Let \(v \in W_{\mathrm B}^{k,\infty}(\Omega;\mathbb R^N)\setminus\{0\}\). Then there exists a family
\[
(t_p)_{p\in (n/\alpha,\infty]}
\subseteq (0,\infty)
\]
such that \(t_p \to t_\infty\) as \(p\to\infty\), and
\[
\left\| g\bigl(t_p v,\, t_p Dv,\, \ldots,\, t_p D^{k-1}v\bigr)\right\|_{L^p(\Omega)}=1
\]
for every \(p\in (n/\alpha,\infty]\). Moreover, if
\[
\|g(v,Dv,\ldots,D^{k-1}v)\|_{L^\infty(\Omega)}=1,
\]
then \(t_\infty=1\).
\end{lemma}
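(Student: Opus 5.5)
Fix $v\in W_{\mathrm B}^{k,\infty}(\Omega;\mathbb R^N)\setminus\{0\}$ and write $V(x):=(v,Dv,\ldots,D^{k-1}v)(x)$. For $p\in(n/\alpha,\infty]$ set
\[
h_p(t):=\|g(tV)\|_{L^p(\Omega)},\qquad t\ge0 .
\]
I will show that each $h_p$ is continuous and strictly increasing with $h_p(0)=0$ and $h_p(t)\to\infty$, define $t_p$ as the unique root of $h_p(t)=1$, and then prove $t_p\to t_\infty$.

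\textbf{Preliminary facts on $g$.} Since $V\in L^\infty$ and $g$ is continuous, $g(tV)$ is bounded on $\Omega$, uniformly for $t$ in compact sets. Hence $h_p$ is finite and, by dominated convergence ($p<\infty$) or uniform continuity of $g$ on compacts ($p=\infty$), continuous in $t$.

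For $Z\neq0$, Assumption~\ref{g}(\ref{partial}) gives
\[
\frac{d}{dt}\,g(tZ)=\tfrac1t\,\partial g(tZ):tZ\in\Big[\tfrac{C_7}{t}\,g(tZ),\,\tfrac{C_8}{t}\,g(tZ)\Big],
\]
with $g(tZ)>0$ for $Z\ne0$. Integrating yields, for $0<s<t$,
\[
(t/s)^{C_7}\,g(sZ)\le g(tZ)\le (t/s)^{C_8}\,g(sZ).
\]
Letting $s\to0$ forces $g(0)=0$, by continuity.

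\textbf{Monotonicity and limits of $h_p$.} The set $\{V\ne0\}$ has positive measure, since $v\ne0$. On that set the pointwise inequality gives $g(tV)\ge (t/s)^{C_7}g(sV)$ for $t>s$, with strictness where $g(sV)>0$, i.e.\ everywhere on $\{V\neq0\}$. Hence
\[
h_p(t)\ge (t/s)^{C_7}h_p(s) \quad\text{and}\quad h_p(t)\le (t/s)^{C_8}h_p(s),
\]
so $h_p$ is strictly increasing and $h_p(0)=0$. Fixing $s=1$ gives $h_p(t)\ge t^{C_7}h_p(1)\to\infty$. The intermediate value theorem then supplies a unique $t_p>0$ with $h_p(t_p)=1$. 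If $\|g(V)\|_{L^\infty}=1$, then $h_\infty(1)=1$, and uniqueness gives $t_\infty=1$.

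\textbf{Convergence $t_p\to t_\infty$ (the main point).} For each fixed $t$, $h_p(t)\to h_\infty(t)$ as $p\to\infty$, since $g(tV)\in L^\infty$ on the finite-measure set $\Omega$. Using the two-sided scaling bounds with $s=t_\infty$:
\[
h_p(t_\infty)\min\{(t/t_\infty)^{C_7},(t/t_\infty)^{C_8}\}\le h_p(t)\le h_p(t_\infty)\max\{(t/t_\infty)^{C_7},(t/t_\infty)^{C_8}\}.
\]
Evaluating at $t=t_p$, where $h_p(t_p)=1$, gives
\[
\min\{\cdot\}\le 1/h_p(t_\infty)\le\max\{\cdot\}.
\]
Since $h_p(t_\infty)\to h_\infty(t_\infty)=1$, the ratio $t_p/t_\infty$ is trapped between $h_p(t_\infty)^{-1/C_7}$ and $h_p(t_\infty)^{-1/C_8}$ (in the appropriate order), both of which tend to $1$. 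Thus $t_p\to t_\infty$.

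The only delicate points are the positivity $g>0$ off the origin and $g(0)=0$; both follow from Assumption~\ref{g}(\ref{partial}) as above. Coercivity (Assumption~\ref{g}(\ref{coercive})) is not needed here.
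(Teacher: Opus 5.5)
Your proof is correct, but it takes a different route from the paper's.

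\textbf{The paper's route.} The paper works with $\rho_\infty(t)=\max_{\overline\Omega}g(tV)$ and gets strict monotonicity from Danskin's theorem. It uses the coercivity Assumption~\ref{g}\,\ref{coercive}, together with monotone convergence for finite $p$, to get divergence to $+\infty$. It takes $g(0)=0$ for granted and only asserts existence, not uniqueness, of $t_p$ for finite $p$. It then proves $t_p\to t_\infty$ by contradiction. First it rules out $t_{p_m}\to\infty$ via monotone convergence. Then it identifies any finite limit point $t_0$ using uniform convergence of $g(t_{p_m}V)$, the convergence $\|\cdot\|_{L^{p_m}}\to\|\cdot\|_{L^\infty}$, and uniqueness of $t_\infty$.

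\textbf{Your route.} You integrate the logarithmic derivative from Assumption~\ref{g}\,\ref{partial} to obtain the ray bounds $(t/s)^{C_7}g(sZ)\le g(tZ)\le (t/s)^{C_8}g(sZ)$. These bounds pass directly to $h_p$ for every $p\in(n/\alpha,\infty]$ simultaneously. This one estimate delivers $g(0)=0$, strict monotonicity, $h_p\to\infty$, uniqueness of every $t_p$, and a quantitative convergence rate. It avoids Danskin's theorem, continuity of $V$ up to $\partial\Omega$, and the compactness/contradiction step. It also avoids coercivity, which, as you correctly observe, already follows from Assumption~\ref{g}\,\ref{partial} and continuity of $g$. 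Only the $C_7$ half of your bound is actually needed for the convergence step. Applying $h_p(t)\ge (t/s)^{C_7}h_p(s)$ with $(s,t)=(t_\infty,t_p)$ or $(t_p,t_\infty)$, according to which is larger, shows that $t_p/t_\infty$ lies between $1$ and $h_p(t_\infty)^{-1/C_7}$.

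\textbf{What each buys.} The paper's soft argument uses Assumption~\ref{g}\,\ref{partial} only to get strict monotonicity along rays. It would therefore survive under weaker hypotheses, such as mere strict ray-monotonicity plus coercivity. Your argument is shorter, self-contained, and quantitative, and under the stated assumptions it is the cleaner proof.
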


\begin{proof}
Fix \(v \in W_{\mathrm B}^{k,\infty}(\Omega;\mathbb R^N)\setminus\{0\}\), and define
\[
\rho_\infty(t)
:=
\max_{x\in\overline{\Omega}}
g\bigl(tv(x),\, tDv(x),\, \ldots,\, tD^{k-1}v(x)\bigr),
\qquad t\ge 0.
\]
Since \(g\) is continuous and \(v,Dv,\ldots,D^{k-1}v\) are bounded and continuous on \(\overline{\Omega}\), the map
\(\rho_\infty\) is continuous on \([0,\infty)\). Also, because \(g(0,\ldots,0)=0\), we have
\[
\rho_\infty(0)=0.
\]

We first show that \(\rho_\infty\) is strictly increasing on \((0,\infty)\). Let
\[
(\eta,P_1,\ldots,P_{k-1})\in
\bigg(\mathbb R^N\times\prod_{j=1}^{k-1}\mathbb R^{N\times n^j}\bigg)\setminus\{0\},
\]
and define
\[
h(s):=g(s\eta,sP_1,\ldots,sP_{k-1}),\qquad s>0.
\]
By Assumption~\ref{g}\,\ref{partial},
\[
\begin{aligned}
h'(s)
&=
\partial_\eta g(s\eta,sP_1,\ldots,sP_{k-1})\cdot \eta
+\sum_{j=1}^{k-1}\partial_{P_j} g(s\eta,sP_1,\ldots,sP_{k-1}):P_j \\
&=
\frac{1}{s}
\bigg[
\partial_\eta g(s\eta,sP_1,\ldots,sP_{k-1})\cdot (s\eta)
+\sum_{j=1}^{k-1}\partial_{P_j} g(s\eta,sP_1,\ldots,sP_{k-1}):(sP_j)
\bigg] \\
&\ge \frac{C_7}{s}\, g(s\eta,sP_1,\ldots,sP_{k-1})
\\
&= \frac{C_7}{s}\, h(s).
\end{aligned}
\]
In particular, \(h'(s)>0\) whenever \(h(s)>0\), and therefore the function
\[
s\mapsto g(s\eta,sP_1,\ldots,sP_{k-1})
\]
is strictly increasing on \((0,\infty)\). It follows that if \(0\le s<t\), then for every \(x\in\overline{\Omega}\),
\[
g\bigl(sv(x),\, sDv(x),\, \ldots,\, sD^{k-1}v(x)\bigr)
\le
g\bigl(tv(x),\, tDv(x),\, \ldots,\, tD^{k-1}v(x)\bigr).
\]
Hence \(\rho_\infty(s)\le \rho_\infty(t)\), so \(\rho_\infty\) is nondecreasing. To prove strict monotonicity, fix \(t_0>0\), and let
\[
\Omega_{t_0}
:=
\Big\{
x\in\overline{\Omega}:
\rho_\infty(t_0)=g\bigl(t_0v(x),\, t_0Dv(x),\, \ldots,\, t_0D^{k-1}v(x)\bigr)
\Big\}.
\]
By Danskin's theorem, the right derivative of \(\rho_\infty\) at \(t_0\) exists and satisfies
\[
\rho_\infty'(t_0^+)
=
\max_{x\in \Omega_{t_0}}
\bigg[
\partial_\eta g_0(x)\cdot v(x)
+\sum_{j=1}^{k-1}\partial_{P_j} g_0(x):D^jv(x)
\bigg],
\]
where \(g_0(x)\) denotes \(g\bigl(t_0v(x),t_0Dv(x),\ldots,t_0D^{k-1}v(x)\bigr)\) and similarly for the partial derivatives. Rewriting this expression and using Assumption~\ref{g}\,\ref{partial}, we obtain
\[
\begin{aligned}
\rho_\infty'(t_0^+)
&=
\frac{1}{t_0}
\max_{x\in \Omega_{t_0}}
\bigg[
\partial_\eta g_0(x)\cdot (t_0v)
+\sum_{j=1}^{k-1}\partial_{P_j} g_0(x):(t_0D^jv)
\bigg] \\
&\ge
\frac{C_7}{t_0}
\max_{x\in\Omega_{t_0}}
g\bigl(t_0v(x),\, t_0Dv(x),\, \ldots,\, t_0D^{k-1}v(x)\bigr) \\
&=
\frac{C_7}{t_0}\,\rho_\infty(t_0).
\end{aligned}
\]
Since \(v\not\equiv 0\), there exists \(x_0\in\overline{\Omega}\) such that
\[
\bigl(v(x_0),Dv(x_0),\ldots,D^{k-1}v(x_0)\bigr)\neq 0,
\]
and thus 
\[
g(t_0v(x_0),t_0Dv(x_0),\ldots,t_0D^{k-1}v(x_0))>0, \text{ for } t_0>0. 
\]
Hence \(\rho_\infty(t_0)>0\), and therefore
\[
\rho_\infty'(t_0^+)>0.
\]
So \(\rho_\infty\) is strictly increasing on \((0,\infty)\). Next we show that \(\rho_\infty(t)\to\infty\) as \(t\to\infty\). Since \(v\not\equiv 0\), choose
\(\bar x\in\overline{\Omega}\) such that
\[
\bigl(v(\bar x),Dv(\bar x),\ldots,D^{k-1}v(\bar x)\bigr)\neq 0.
\]
By the coercivity assumption on \(g\), namely Assumption~\ref{g}\,\ref{coercive},
\[
g\bigl(tv(\bar x),\, tDv(\bar x),\, \ldots,\, tD^{k-1}v(\bar x)\bigr)\to\infty,
\]
as $t\to\infty$. Therefore,
\[
\rho_\infty(t)\ge g\bigl(tv(\bar x),\, tDv(\bar x),\, \ldots,\, tD^{k-1}v(\bar x)\bigr)\to\infty,
\]
as $t\to\infty$.  Since \(\rho_\infty\) is continuous, strictly increasing, satisfies \(\rho_\infty(0)=0\), and tends to \(+\infty\) as
\(t\to\infty\), there exists a unique \(t_\infty>0\) such that
\[
\rho_\infty(t_\infty)=1.
\]
Equivalently,
\[
\left\| g\bigl(t_\infty v,\, t_\infty Dv,\, \ldots,\, t_\infty D^{k-1}v\bigr)\right\|_{L^\infty(\Omega)}=1.
\]
If in addition
\[
\|g(v,Dv,\ldots,D^{k-1}v)\|_{L^\infty(\Omega)}=1,
\]
then by uniqueness we must have \(t_\infty=1\). Now fix \(p\in (n/\alpha,\infty)\), and define
\[
\rho_p(t)
:=
\fint_\Omega g\bigl(tv,\, tDv,\, \ldots,\, tD^{k-1}v\bigr)^p\, d\mathcal L^n,
\qquad t\ge 0.
\]
Again \(\rho_p\) is continuous and \(\rho_p(0)=0\). Moreover, since \(v\in W_{\mathrm B}^{k,\infty}(\Omega;\mathbb R^N)\),
Morrey's embedding gives
\[
v\in C^{k-1}(\overline{\Omega};\mathbb R^N).
\]
Because \(v\not\equiv 0\), the set
\[
E:=\Big\{x\in\Omega:\bigl(v(x),Dv(x),\ldots,D^{k-1}v(x)\bigr)\neq 0\Big\}
\]
has positive measure. For each \(x\in E\), the map
\[
t\mapsto g\bigl(tv(x),\, tDv(x),\, \ldots,\, tD^{k-1}v(x)\bigr)
\]
is increasing and tends to \(+\infty\) as \(t\to\infty\). Hence, by the monotone convergence theorem,
\[
\int_E g\bigl(tv,\, tDv,\, \ldots,\, tD^{k-1}v\bigr)^p\, d\mathcal L^n \to \infty,
\qquad \text{as } t\to\infty.
\]
Therefore \(\rho_p(t)\to\infty\), as \(t\to\infty\). By continuity, there exists \(t_p>0\) such that
\[
\rho_p(t_p)=1,
\]
that is,
\[
\left\| g\bigl(t_p v,\, t_p Dv,\, \ldots,\, t_p D^{k-1}v\bigr)\right\|_{L^p(\Omega)}=1.
\]
It remains to show that \(t_p\to t_\infty\) as \(p\to\infty\). Suppose for contradiction that this is false. Then there exist
a subsequence \((p_m)\) with \(p_m\to\infty\) and some
\[
t_0\in [0,\infty]\setminus\{t_\infty\}
\]
such that
\[
t_{p_m}\to t_0.
\]
We first exclude the possibility \(t_0=\infty\). If \(t_{p_m}\to\infty\), then after passing to a further subsequence we may assume that
\((t_{p_m})\) is increasing. Thus, for every \(x\in E\),
\[
g\bigl(t_{p_m}v(x),\, t_{p_m}Dv(x),\, \ldots,\, t_{p_m}D^{k-1}v(x)\bigr)\to\infty,
\]
and the sequence is pointwise increasing on \(E\). By the monotone convergence theorem,
\[
\int_E g\bigl(t_{p_m}v,\, t_{p_m}Dv,\, \ldots,\, t_{p_m}D^{k-1}v\bigr)^{p_m}\, d\mathcal L^n \to \infty.
\]
But this is impossible, because
\[
\fint_\Omega g\bigl(t_{p_m}v,\, t_{p_m}Dv,\, \ldots,\, t_{p_m}D^{k-1}v\bigr)^{p_m}\, d\mathcal L^n = 1
\]
for every \(m\). Hence \(t_0<\infty\). Since \(t_{p_m}\to t_0\) as $m\to\infty$ and \(v,Dv,\ldots,D^{k-1}v\) are continuous on \(\overline\Omega\), we have
\[
g\bigl(t_{p_m}v,\, t_{p_m}Dv,\, \ldots,\, t_{p_m}D^{k-1}v\bigr)
\to
g\bigl(t_0v,\, t_0Dv,\, \ldots,\, t_0D^{k-1}v\bigr)
\quad\text{uniformly on }\overline\Omega.
\]
Therefore,
\[
\left\| g\bigl(t_{p_m}v,\, t_{p_m}Dv,\, \ldots,\, t_{p_m}D^{k-1}v\bigr)\right\|_{L^{p_m}(\Omega)}
=
\left\| g\bigl(t_0v,\, t_0Dv,\, \ldots,\, t_0D^{k-1}v\bigr)\right\|_{L^{p_m}(\Omega)}
+o(1),
\]
as \(m\to\infty\). Since the left-hand side equals \(1\), passing to the limit and using the standard convergence of \(L^{p_m}\)-norms to the \(L^\infty\)-norm gives
\[
1
=
\left\| g\bigl(t_0v,\, t_0Dv,\, \ldots,\, t_0D^{k-1}v\bigr)\right\|_{L^\infty(\Omega)}
=
\rho_\infty(t_0).
\]
But \(\rho_\infty\) is strictly increasing and \(\rho_\infty(t_\infty)=1\), so necessarily \(t_0=t_\infty\), a contradiction. We therefore conclude that \(t_p\to t_\infty\) as \(p\to\infty\).
\end{proof}

The following lemma guarantees the existence of a minimiser $u_p$ for each finite-$p$ problem. It ensures that the functional is weakly lower semicontinuous, hence attaining its infimum.

\begin{lemma}\label{lemma3}
Let \(p>n/\alpha\). Then the \(L^p\)-constrained minimisation problem
\[
\|f(D^ku)\|_{L^p(\Omega)}
=
\inf\Bigl\{
\|f(D^kv)\|_{L^p(\Omega)} :
v\in W_{\mathrm B}^{k,\alpha p}(\Omega;\mathbb R^N),\ 
\|g(v,Dv,\ldots,D^{k-1}v)\|_{L^p(\Omega)}=1
\Bigr\}
\]
admits a minimiser
\[
u_p\in W_{\mathrm B}^{k,\alpha p}(\Omega;\mathbb R^N).
\]
\end{lemma}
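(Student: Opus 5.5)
We will use the direct method of the calculus of variations on the admissible class
\[
\mathcal{A}_p:=\Bigl\{v\in W_{\mathrm B}^{k,\alpha p}(\Omega;\mathbb R^N):\ \|g(v,Dv,\ldots,D^{k-1}v)\|_{L^p(\Omega)}=1\Bigr\}.
\]

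\emph{Non-emptiness and finite infimum.} By Lemma~\ref{lemma2}, applied to any nonzero $v\in C^\infty_c(\Omega;\mathbb R^N)$, the rescaling $t_pv$ lies in $\mathcal A_p$. Hence $\mathcal A_p\neq\emptyset$. By the upper bound in \eqref{growth-a}, the infimum $I_p$ is finite. Since \eqref{1.3c} implies that $f\ge0$ (the map $s\mapsto f(sX)$ is increasing and $f(0)\ge0$ by continuity), we also have $I_p\ge0$.

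\emph{Compactness.} Take a minimising sequence $(v_m)\subseteq\mathcal A_p$. The lower bound in \eqref{growth-a} gives
\[
C_4\|D^kv_m\|_{L^{\alpha p}}^{\alpha}\le \|f(D^kv_m)\|_{L^p}+C_3|\Omega|^{1/p},
\]
so $(D^kv_m)$ is bounded in $L^{\alpha p}$. The boundary conditions then give control of the lower derivatives. In the clamped case we iterate the Poincaré inequality on $D^{k-1}v,\ldots,v$. In the hinged case, $D^{k-2}v\in W^{1,\alpha p}_0$, so Poincaré applies from order $k-2$ downward. For $D^{k-1}v$ we cannot use Poincaré directly; instead, $D^{k-1}v$ is the gradient of $D^{k-2}v$, which has zero trace. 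Hence $\int_\Omega D^{k-1}v=0$ by the divergence theorem, and Poincaré--Wirtinger applies to $D^{k-1}v$. So $(v_m)$ is bounded in $W^{k,\alpha p}$. Because $\alpha p>n$, reflexivity and the compact Morrey embedding $W^{k,\alpha p}\hookrightarrow C^{k-1}(\overline\Omega)$ yield a subsequence with $v_m\rightharpoonup u_p$ weakly in $W^{k,\alpha p}$ and $v_m\to u_p$ in $C^{k-1}(\overline\Omega)$. The class $W_{\mathrm B}^{k,\alpha p}$ is a closed subspace, hence weakly closed, so $u_p\in W^{k,\alpha p}_{\mathrm B}$.

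\emph{Constraint and lower semicontinuity.} Since $g$ is continuous and $(v_m,\ldots,D^{k-1}v_m)\to(u_p,\ldots,D^{k-1}u_p)$ uniformly, we get $g(\cdot)\to g(\cdot)$ uniformly on $\overline\Omega$. Therefore the constraint $\|g(u_p,\ldots,D^{k-1}u_p)\|_{L^p}=1$ passes to the limit, and $u_p\in\mathcal A_p$.

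It remains to show $\int_\Omega f(D^ku_p)^p\le\liminf_m\int_\Omega f(D^kv_m)^p$. This is the main obstacle. Since $f\ge0$, the integrand is $F:=f^p$, which is $C^1$ with $p$-growth in $f$, and $F(X)\le C(1+|X|^{\alpha p})$. Morrey $k$-quasiconvexity of $f$ transfers to $F$ by Jensen's inequality, since $t\mapsto t^p$ is convex and nondecreasing on $[0,\infty)$:
\[
F(X)=f(X)^p\le\Bigl(\fint f(X+D^k\phi)\Bigr)^p\le\fint F(X+D^k\phi).
\]
We would then invoke the weak lower semicontinuity theorem for $k$-quasiconvex integrands with growth of order $\alpha p$ on $W^{k,\alpha p}$, due to Meyers and to Fusco's higher-order version of Acerbi--Fusco; continuity of $F$ suffices there. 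This gives the claimed lower semicontinuity, so $u_p$ attains $I_p$ and is the required minimiser.
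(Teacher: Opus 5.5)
Your proposal is correct and follows essentially the same route as the paper. Both arguments use non-emptiness via Lemma~\ref{lemma2}, $k$-quasiconvexity of $f^p$ via Jensen, and coercivity from \eqref{growth-a} combined with iterated Poincar\'e inequalities (with the Gauss--Green mean-zero argument and Poincar\'e--Wirtinger for $D^{k-1}v$ in the hinged case); they then pass the constraint to the limit through the compact Morrey embedding into $C^{k-1}(\overline\Omega)$ and conclude with a higher-order Acerbi--Fusco-type weak lower semicontinuity theorem, where you cite Meyers/Fusco and the paper cites Zhou's Theorem~3.6.
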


\begin{proof}
Fix \(p\in (n/\alpha,\infty)\), and choose
\[
v_0\in W_{\mathrm B}^{k,\infty}(\Omega;\mathbb R^N)\setminus\{0\}.
\]
By Lemma~\ref{lemma2}, there exists \(t_p>0\) such that
\[
\big\|g\big(t_pv_0,t_pDv_0,\ldots,t_pD^{k-1}v_0\big) \big\|_{L^p(\Omega)}=1.
\]
Hence the admissible class is nonempty. Next we show that the functional
\[
\mathscr F_p(v):=\|f(D^kv)\|_{L^p(\Omega)}
\]
is weakly lower semicontinuous on \(W^{k,\alpha p}(\Omega;\mathbb R^N)\). Since \(f\) is
(Morrey) \(k\)-quasiconvex by Assumption~\ref{f}\,\ref{morrey}, Jensen's inequality implies that
\(f^p\) is also (Morrey) \(k\)-quasiconvex. Indeed, for every
\(X\in\mathbb R_s^{N\times n^k}\) and every \(\phi\in W_0^{k,\infty}(\Omega;\mathbb R^N)\),
\[
f(X)^p
\le
\left(\,\fint_\Omega f\bigl(X+D^k\phi(x)\bigr)\,d\mathcal L^n(x)\! \right)^p
\le
\fint_\Omega f\bigl(X+D^k\phi(x)\bigr)^p\,d\mathcal L^n(x).
\]
Moreover, by Assumption~\ref{f}\,\eqref{growth-a}, there exist constants \(C_5(p),C_6(p)>0\) such that
\[
f(X)^p\le C_5(p)|X|^{\alpha p}+C_6(p),
\qquad X\in\mathbb R_s^{N\times n^k}.
\]
Therefore, by \cite[Theorem~3.6]{zhou1998weak}, the map
\[
v\mapsto \int_\Omega f(D^kv)^p\,d\mathcal L^n
\]
is weakly lower semicontinuous on \(W^{k,\alpha p}(\Omega;\mathbb R^N)\), and hence so is
\(\mathscr F_p\) on the closed subspace \(W_{\mathrm B}^{k,\alpha p}(\Omega;\mathbb R^N)\).

Let now \((u_j)_{j\ge1}\subseteq W_{\mathrm B}^{k,\alpha p}(\Omega;\mathbb R^N)\) be a minimising sequence. Since the admissible class is nonempty and \(f\ge0\), the infimum is finite.

We now prove the coercivity of the functional \(\mathscr F_p\) on \(W_{\mathrm B}^{k,\alpha p}(\Omega;\mathbb R^N)\), considering the two cases of boundary conditions separately.

\medskip
\noindent\emph{Clamped case.}
Suppose first that \(u\in W_{\mathrm C}^{k,\alpha p}(\Omega;\mathbb R^N)=W_0^{k,\alpha p}(\Omega;\mathbb R^N)\). Then \(D^ju=0\) on \(\partial\Omega\) for \(j=0,\ldots,k-1\) in the trace sense. By Assumption~\ref{f}\,\eqref{growth-a},
\[
f(D^ku)\ge -C_3+C_4|D^ku|^\alpha
\qquad \text{a.e. on }\Omega.
\]
Then, successive applications of the Poincar\'e inequality to $u$ and its derivatives up to $(k-1)$-order, yields
\[
\|u\|_{W^{k,\alpha p}(\Omega)}
\le C\,\|D^ku\|_{L^{\alpha p}(\Omega)},
\]
where the constant $C=C(\Omega,k,\alpha,p)>0$ depends only on $\Omega,k,\alpha,p$. Consequently,
\[
\mathscr F_p(u)\ge c\,\|u\|_{W^{k,\alpha p}(\Omega)}^\alpha - C
\]
for suitable constants \(c,C>0\), independent of the mapping $u \in W_{\mathrm C}^{k,\alpha p}(\Omega;\mathbb R^N)$.

\medskip
\noindent\emph{Hinged case.}
Now let \(u\in W_{\mathrm H}^{k,\alpha p}(\Omega;\mathbb R^N)\). By definition,
\[
D^ju=0 \quad\text{on }\partial\Omega
\qquad\text{for }j=0,\ldots,k-2,
\]
in the trace sense. For \(j=0,\ldots,k-3\), since \(D^ju\) vanishes on \(\partial\Omega\), the standard Poincar\'e inequality gives
\[
\|D^ju\|_{L^{\alpha p}(\Omega)}\le C_j(\Omega, j,\alpha,p) \|D^{j+1}u\|_{L^{\alpha p}(\Omega)},
\]
for a constant depending only on $\Omega, j,\alpha,p$. Similarly,
\[
\|D^{k-2}u\|_{L^{\alpha p}(\Omega)}\le C_{k-2}(\Omega, k,\alpha,p) \|D^{k-1}u\|_{L^{\alpha p}(\Omega)}.
\]
For the highest order term, the Poincar\'e--Wirtinger inequality gives
\[
\left\| D^{k-1}u- \fint_\Omega D^{k-1}u\,d\mathcal L^n \right\|_{L^{\alpha p}(\Omega)}
\le
C_{k-1}(\Omega, k,\alpha,p) \|D^ku\|_{L^{\alpha p}(\Omega)},
\]
constant depending only on $\Omega, k,\alpha,p$. Since \(D^{k-2}u=0\) on \(\partial\Omega\), the Gauss--Green theorem implies
\[
\fint_\Omega D^{k-1}u\,d\mathcal L^n
=
\fint_{\partial\Omega} D^{k-2}u\otimes \nu\,d\mathcal H^{n-1}
=0.
\]
Hence
\[
\|D^{k-1}u\|_{L^{\alpha p}(\Omega)}
\le
C_{k-1}(\Omega, k,\alpha,p)\|D^ku\|_{L^{\alpha p}(\Omega)}.
\]

\noindent Combining all the estimates, we obtain (for a new constant)
\[
\sum_{j=0}^{k-1}\|D^ju\|_{L^{\alpha p}(\Omega)}
\le
C\,\|D^ku\|_{L^{\alpha p}(\Omega)},
\]
and therefore
\[
\|u\|_{W^{k,\alpha p}(\Omega)}
\le C'\,\|D^ku\|_{L^{\alpha p}(\Omega)}.
\]
Using Assumption~\ref{f}\,\eqref{growth-a} once again, it follows that
\[
\mathscr F_p(u)\ge c'\,\|u\|_{W^{k,\alpha p}(\Omega)}^\alpha - C'
\]
for suitable constants \(c',C'>0\).

In conclusion, in either boundary class, \(\mathscr F_p\) is coercive on
\(W_{\mathrm B}^{k,\alpha p}(\Omega;\mathbb R^N)\).

By reflexivity of \(W_{\mathrm B}^{k,\alpha p}(\Omega;\mathbb R^N)\), after passing to a subsequence we may assume that
\[
u_j \rightharpoonup u_p
\qquad\text{weakly in }W_{\mathrm B}^{k,\alpha p}(\Omega;\mathbb R^N).
\]
Since \(\alpha p>n\), Morrey's embedding yields
\[
u_j\to u_p \quad\text{in } C^{k-1}(\overline{\Omega};\mathbb R^N).
\]
In particular,
\[
(u_j,Du_j,\ldots,D^{k-1}u_j)\to (u_p,Du_p,\ldots,D^{k-1}u_p)
\quad\text{uniformly on }\overline{\Omega}.
\]
Because \(g\) is continuous, we deduce that
\[
g(u_j,Du_j,\ldots,D^{k-1}u_j)\to g(u_p,Du_p,\ldots,D^{k-1}u_p)
\quad\text{uniformly on }\overline{\Omega},
\]
and therefore the constraint passes to the limit:
\[
\|g(u_p,Du_p,\ldots,D^{k-1}u_p)\|_{L^p(\Omega)}=1.
\]

Finally, by weak lower semicontinuity,
\[
\mathscr F_p(u_p)
\le
\liminf_{j\to\infty} \mathscr F_p(u_j),
\]
so \(u_p\) attains the minimum. This completes the proof.
\end{proof}

This lemma derives the PDE system satisfied by the finite-$p$ minimisers $u_p$. The differentiability assumptions on $f$ and $g$ ensure that the functionals are G\^ateaux differentiable, allowing one to apply the Lagrange multiplier rule in Banach spaces and obtain the weak PDE form.

\begin{lemma}\label{lemma4}
Let \(p>n/\alpha\), and let \(u_p\) be the minimiser of the \(L^p\)-constrained
problem obtained in Lemma~\ref{lemma3}. Then there exists
\(\lambda_p\in\mathbb R\) such that the pair \((u_p,\lambda_p)\) satisfies
\begin{equation}\label{1.11}
\begin{aligned}
&\fint_\Omega
f(D^ku_p)^{p-1}\,\partial f(D^ku_p):D^k\phi
\,d\mathcal L^n \\
&\qquad=
\lambda_p
\fint_\Omega
g(u_p,Du_p,\ldots,D^{k-1}u_p)^{p-1}
\bigg(
\partial_\eta g_p\cdot\phi
+
\sum_{j=1}^{k-1}\partial_{P_j} g_p:D^j\phi
\bigg)
\,d\mathcal L^n ,
\end{aligned}
\end{equation}
for all test maps
\[
\phi\in W_{\mathrm B}^{k,\alpha p}(\Omega;\mathbb R^N),
\]
where we have written \(g_p := g(u_p,Du_p,\ldots,D^{k-1}u_p)\) and similarly for the partial derivatives.

Consequently \(u_p\) is a weak solution of the system
\[
D^k:\!\big(f(D^ku_p)^{p-1}\partial f(D^ku_p)\big)
=
\lambda_p
\sum_{j=0}^{k-1}(-1)^j D^j\!\cdot\!\big(
g_p^{p-1}\,\partial_{P_j} g_p
\big),
\]
where we use the convention \(\partial_{P_0}g = \partial_\eta g\), \(D^0\) is the identity, and \(D^j\cdot\) denotes the \(j\)-fold divergence.
\end{lemma}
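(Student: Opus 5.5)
The plan is to apply the Lagrange multiplier rule in the Banach space $X:=W_{\mathrm B}^{k,\alpha p}(\Omega;\mathbb R^N)$ to the pair of functionals
\[
\mathscr J(v):=\fint_\Omega f(D^kv)^p\,d\mathcal L^n,\qquad
\mathscr G(v):=\fint_\Omega g(v,Dv,\ldots,D^{k-1}v)^p\,d\mathcal L^n .
\]
Since $t\mapsto t^p$ is increasing on $[0,\infty)$, minimising $\|f(D^kv)\|_{L^p}$ subject to $\|g(\cdot)\|_{L^p}=1$ is the same as minimising $\mathscr J$ on the level set $\{\mathscr G=1\}$ (the normalisation by $\mathcal L^n(\Omega)$ only rescales constants). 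So $u_p$ is a constrained minimiser of $\mathscr J$ on $\{\mathscr G=1\}$.

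The first step is to check that $\mathscr J,\mathscr G\in C^1(X)$ and to compute their derivatives. For $\mathscr J$, we have $f\ge0$ away from the origin: indeed \eqref{1.3c} forces $f>0$ on $X\neq0$, and $f(0)\ge0$ by continuity, so $f^p$ is $C^1$ with derivative $pf^{p-1}\partial f$. By \eqref{growth-a}--\eqref{growth-b}, $|f^{p-1}\partial f(X)|\le C(1+|X|^{\alpha(p-1)+\alpha\beta})\le C(1+|X|^{\alpha p-1})$. Here we need $\alpha\beta\le\alpha-1$; if this fails we would instead use $\beta\le1$ together with the growth of $\partial f$ implied by \eqref{1.3c}. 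This is exactly the growth needed for the Nemytskii operator $D^kv\mapsto f^{p-1}\partial f(D^kv)$ to map $L^{\alpha p}$ continuously into $L^{(\alpha p)'}$. Dominated convergence then gives Fréchet differentiability with
\[
\mathscr J'(u)[\phi]=p\fint_\Omega f(D^ku)^{p-1}\partial f(D^ku):D^k\phi \,d\mathcal L^n.
\]
For $\mathscr G$ the situation is easier. Since $\alpha p>n$, Morrey's embedding puts $(v,\ldots,D^{k-1}v)$ in $C^0(\overline\Omega)$ continuously, and $g\in C^1$. Hence $\mathscr G$ is $C^1$ on $X$ with
\[
\mathscr G'(u)[\phi]=p\fint_\Omega g^{p-1}\Big(\partial_\eta g\cdot\phi+\sum_j\partial_{P_j}g:D^j\phi\Big)\,d\mathcal L^n .
\]

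The second step, which I expect to be the only genuine obstacle, is verifying the regularity of the constraint, i.e.\ that $\mathscr G'(u_p)\neq0$. I will test with $\phi=u_p$ itself, which lies in $X$. By Assumption~\ref{g}\,\ref{partial},
\[
\mathscr G'(u_p)[u_p]\ \ge\ pC_7\fint_\Omega g(u_p,\ldots,D^{k-1}u_p)^p\,d\mathcal L^n=pC_7>0,
\]
using the constraint $\mathscr G(u_p)=1$. At points where $(u_p,\ldots,D^{k-1}u_p)=0$ the integrand vanishes, and $g(0)=0$, so the inequality holds there trivially.

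With these two steps in hand, the Lagrange multiplier theorem on Banach spaces (a $C^1$ constraint with surjective derivative onto $\mathbb R$) gives $\mu\in\mathbb R$ with $\mathscr J'(u_p)=\mu\,\mathscr G'(u_p)$ in $X^*$. Setting $\lambda_p:=\mu$ and cancelling the factor $p$ yields \eqref{1.11} for all $\phi\in X$. As a consistency check, testing with $\phi=u_p$ and using \eqref{1.3c} and \ref{partial} shows $\lambda_p\ge0$, which will matter later when $\Lambda_p$ is identified.

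Finally, the distributional form follows by restricting to $\phi\in C^\infty_c(\Omega;\mathbb R^N)\subseteq X$ and integrating by parts formally: $j$ integrations by parts produce the factor $(-1)^j$ and the $j$-fold divergence, and the $k$-fold one on the left gives $D^k:(\cdot)$, up to the sign convention $(-1)^k$ absorbed into the notation of the weak system.
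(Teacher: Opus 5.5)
Your overall route is the paper's: the Lagrange multiplier rule in Banach spaces for $\fint f(D^kv)^p$ under the constraint $\fint g(v,\ldots,D^{k-1}v)^p=1$, computation of the two first variations, cancellation of the common factor $p$, and integration by parts. Your check that $\mathscr G'(u_p)[u_p]\ge pC_7>0$ supplies the nondegeneracy hypothesis of the multiplier rule, which the paper uses without comment. Your remark about the missing $(-1)^k$ in the strong form is also correct.

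One step does not work as written. You correctly observe that \eqref{growth-a}--\eqref{growth-b} give $|f^{p-1}\partial f(X)|\le C(1+|X|^{\alpha p-1})$ only when $\alpha\beta\le\alpha-1$. For $\beta\in(1-1/\alpha,1]$ you fall back on ``the growth of $\partial f$ implied by \eqref{1.3c}''. But \eqref{1.3c} only controls the radial component $\partial f(X):X$, not $|\partial f(X)|$, so it gives no such bound. In that range you therefore have not shown that $f(D^ku_p)^{p-1}\partial f(D^ku_p)\in L^{(\alpha p)'}$, let alone that $\mathscr J$ is $C^1$ on $W^{k,\alpha p}_{\mathrm B}$.

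The hypothesis that does the job is the $k$-quasiconvexity in Assumption~\ref{f}.
\begin{itemize}
\item As is standard, it implies that $t\mapsto f(X+t\,a\otimes b^{\otimes k})$ is convex for all $a\in\mathbb R^N$ and $b\in\mathbb R^n$.
\item Such tensors span $\mathbb R_s^{N\times n^k}$, so you can fix a basis $\{E_i\}$ of them.
\item Apply the one-dimensional convexity estimate to each $\psi(t)=f(X+tE_i)$ with step $r=1+|X|$, using $0\le f\le C_5|X|^\alpha+C_6$. This gives $|\partial f(X):E_i|\le C(1+|X|)^{\alpha-1}$.
\item Hence $|\partial f(X)|\le C(1+|X|^{\alpha-1})$, and so $|f^{p-1}\partial f(X)|\le C(1+|X|^{\alpha p-1})$ regardless of $\beta$.
\end{itemize}
With this replacement the rest of your argument goes through. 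The paper itself only asserts this differentiability via ``standard growth estimates''.
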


In the hinged case there is an additional natural boundary condition
arising from the integration-by-parts procedure, since \(D^{k-1}u\) is free on
\(\partial\Omega\). We will not need this condition explicitly in the
sequel.

\begin{proof}
Define the functionals
\[
\mathcal F_p(u)
:=
\fint_\Omega f(D^ku)^p\,d\mathcal L^n,
\qquad
\mathcal G_p(u)
:=
\fint_\Omega g(u,Du,\ldots,D^{k-1}u)^p\,d\mathcal L^n .
\]
By Assumption~\ref{f}\,\eqref{growth-a}, the map
\(u\mapsto\mathcal F_p(u)\) is well defined on
\(W_{\mathrm B}^{k,\alpha p}(\Omega;\mathbb R^N)\).
Standard growth estimates imply that both \(\mathcal F_p\) and
\(\mathcal G_p\) are continuously Fr\'echet differentiable on this space. Since \(u_p\) minimises \(\mathcal F_p\) subject to the constraint
\[
\mathcal G_p(u)=1,
\]
the Lagrange multiplier rule in Banach spaces
(see e.g.\ \cite[p.~278]{zeidler2013nonlinear})
provides a scalar \(\lambda_p\in\mathbb R\) such that
\[
D\mathcal F_p(u_p)[\phi]
=
\lambda_p\,D\mathcal G_p(u_p)[\phi]
\qquad
\text{for all }\phi\in W_{\mathrm B}^{k,\alpha p}(\Omega;\mathbb R^N).
\]
Computing the first variations gives
\[
D\mathcal F_p(u_p)[\phi]
=
\fint_\Omega
f(D^ku_p)^{p-1}\partial f(D^ku_p):D^k\phi
\,d\mathcal L^n
\]
and
\[
D\mathcal G_p(u_p)[\phi]
=
\fint_\Omega
g_p^{p-1}
\bigg(
\partial_\eta g_p\cdot\phi
+
\sum_{j=1}^{k-1}\partial_{P_j} g_p:D^j\phi
\bigg)
\,d\mathcal L^n .
\]
Substituting these expressions into the multiplier identity yields the
weak formulation stated above. Finally, integrating by parts \(j\) times in each \(D^j\phi\)-term gives the divergence-form PDE.
\end{proof}

This lemma relates the Lagrange multipliers $\lambda_p$ to the $L^p$ norms of $f(D^ku_p)$, introducing $\Lambda_p = \lambda_p^{1/p}$. It establishes bounds for $\Lambda_p$, which are crucial for showing convergence of the eigenvalues as $p \to \infty$ and identifying the limiting $\Lambda_\infty$.

\begin{lemma}\label{lemma5}
Let \(\{(u_p,\lambda_p)\}_{p>n/\alpha}\) be the family obtained in
Lemma~\ref{lemma4}. Then for every \(p>n/\alpha\) there exists
\(\Lambda_p>0\) such that
\[
\lambda_p=(\Lambda_p)^p>0 .
\]
Define
\[
L_p:=\|f(D^ku_p)\|_{L^p(\Omega)} .
\]
Then the following bounds hold:
\[
0<
\left(\frac{C_1}{C_8}\right)^{1/p} L_p
\le
\Lambda_p
\le
\left(\frac{C_2}{C_7}\right)^{1/p} L_p .
\]
\end{lemma}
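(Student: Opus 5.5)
Test the Euler--Lagrange identity \eqref{1.11} of Lemma~\ref{lemma4} with the minimiser itself, i.e.\ take $\phi=u_p$. This is admissible because $u_p\in W_{\mathrm B}^{k,\alpha p}(\Omega;\mathbb R^N)$. The left-hand side becomes
\[
\fint_\Omega f(D^ku_p)^{p-1}\,\partial f(D^ku_p):D^ku_p\,d\mathcal L^n .
\]
On $\{D^ku_p\neq0\}$ we apply \eqref{1.3c} pointwise, and on $\{D^ku_p=0\}$ the integrand vanishes. Either way the integrand is bounded between $C_1 f(D^ku_p)^p$ and $C_2 f(D^ku_p)^p$, so the left-hand side lies between $C_1 L_p^p$ and $C_2 L_p^p$. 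One small point: at $X=0$ we have $\partial f(0):0=0$, and $f(0)\ge0$ by continuity, so the lower bound still holds there; the upper bound is immediate.

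Similarly, the right-hand side contains the factor
\[
\fint_\Omega g_p^{p-1}\Big(\partial_\eta g_p\cdot u_p+\sum_{j=1}^{k-1}\partial_{P_j}g_p:D^ju_p\Big)\,d\mathcal L^n .
\]
By Assumption~\ref{g}\,\ref{partial} this lies between $C_7\fint g_p^p$ and $C_8\fint g_p^p$, where at points with $(u_p,\dots,D^{k-1}u_p)=0$ everything vanishes because $g(0)=0$. The constraint $\|g_p\|_{L^p}=1$ (normalised average) gives $\fint g_p^p=1$, so the factor lies in $[C_7,C_8]$. In particular it is strictly positive.

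Next we need $L_p>0$. Suppose $L_p=0$. Then $f(D^ku_p)=0$ a.e., and \eqref{1.3c} forces $D^ku_p=0$ a.e.: indeed, $X\neq0$ gives $f(X)>0$, since $C_1f(X)>0$. By the Poincar\'e chain used in Lemma~\ref{lemma3}, we would get $u_p=0$, contradicting the constraint $\fint g_p^p=1$. Hence the left-hand side is at least $C_1L_p^p>0$.

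Dividing the identity by the positive right-hand factor shows $\lambda_p>0$, and
\[
\frac{C_1}{C_8}L_p^p\le\lambda_p\le\frac{C_2}{C_7}L_p^p .
\]
We then set $\Lambda_p:=\lambda_p^{1/p}$ and take $p$-th roots to obtain the stated bounds.

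The main subtlety is the normalisation. The paper mixes $\fint$ and $\|\cdot\|_{L^p}$, so the constraint must be read consistently with Lemma~\ref{lemma2}, namely $\fint g_p^p=1$, and $L_p^p$ must likewise be read as $\fint f(D^ku_p)^p$. With any other normalisation the bounds would pick up harmless factors of $\mathcal L^n(\Omega)^{1/p}$. Apart from this and the degenerate zero points handled above, the argument is direct.
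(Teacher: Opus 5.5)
Your proposal is correct and follows essentially the same route as the paper: test \eqref{1.11} with $\phi=u_p$, sandwich both sides using \eqref{1.3c} and Assumption~\ref{g}\,\ref{partial}, use $\fint_\Omega g_p^p\,d\mathcal L^n=1$, and get $L_p>0$ from the non-admissibility of $u\equiv0$ (you justify this more carefully than the paper, via the Poincar\'e chain). One small correction: at $X=0$ the lower bound $C_1f(0)^p\le 0$ needs $f(0)=0$, not merely $f(0)\ge0$; this does hold, because \eqref{1.3c} and the continuity of $\partial f$ give $C_1f(X)\le|\partial f(X)|\,|X|\to0$ as $X\to0$ (the same argument gives $g(0)=0$), and your normalisation caveat is overcautious, since with consistently unnormalised norms the factors of $\mathcal L^n(\Omega)$ cancel in the ratio.
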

\begin{proof}
First observe that \(L_p>0\).
Indeed, the only map \(u\in W_B^{k,\alpha p}(\Omega;\mathbb R^N)\)
for which
\[
\|f(D^ku)\|_{L^p(\Omega)}=0
\]
is \(u\equiv0\), since \(f\ge0\) and \(f(X)=0\) only for \(X=0\).
However \(u\equiv0\) does not belong to the admissible class because
\[
\|g(u,Du,\ldots,D^{k-1}u)\|_{L^p(\Omega)}=0 \neq 1 .
\]
Hence \(L_p>0\). Next we use the Euler--Lagrange identity from Lemma~\ref{lemma4}.
Choosing the test function \(\phi=u_p\) gives
\[
\begin{aligned}
&\fint_\Omega
f(D^ku_p)^{p-1}\partial f(D^ku_p):D^ku_p
\,d\mathcal L^n \\
&\qquad=
\lambda_p
\fint_\Omega
g_p^{p-1}
\bigg(
\partial_\eta g_p\cdot u_p
+
\sum_{j=1}^{k-1}\partial_{P_j} g_p:D^ju_p
\bigg)
\,d\mathcal L^n .
\end{aligned}
\]
Using Assumption~\ref{f}\,\eqref{1.3c}, we obtain
\[
C_1
\fint_\Omega f(D^ku_p)^p
\le
\fint_\Omega
f(D^ku_p)^{p-1}\partial f(D^ku_p):D^ku_p
\le
C_2
\fint_\Omega f(D^ku_p)^p .
\]
Similarly, Assumption~\ref{g}\,\ref{partial} implies
\[
C_7 g(\eta,P_1,\ldots,P_{k-1})
\le
\partial_\eta g\cdot\eta
+\sum_{j=1}^{k-1}\partial_{P_j} g:P_j
\le
C_8 g(\eta,P_1,\ldots,P_{k-1}).
\]
Multiplying by \(g^{p-1}\) and integrating gives
\[
C_7
\fint_\Omega g_p^p
\le
\fint_\Omega
g_p^{p-1}
\bigg(\partial_\eta g_p\cdot u_p
+\sum_{j=1}^{k-1}\partial_{P_j} g_p:D^ju_p\bigg)
\le
C_8
\fint_\Omega g_p^p .
\]
Since the constraint implies
\[
\fint_\Omega g_p^p=1 ,
\]
combining the above inequalities with the Euler--Lagrange identity yields
\[
\frac{C_1}{C_8}L_p^p
\le
\lambda_p
\le
\frac{C_2}{C_7}L_p^p .
\]
Finally, define
\[
\Lambda_p:=\lambda_p^{1/p}>0 .
\]
Taking \(p\)-th roots, we arrive at the desired estimate.
\end{proof}

The next proposition shows that the sequence $(u_p)$ converges, up to a subsequence, to a limiting function $u_\infty$ that solves the $L^\infty$ minimisation problem. It bridges the finite-$p$ approximations and the limiting PDE, ensuring the convergence of both the minimisers and eigenvalues.

\begin{proposition}\label{proposition6}
There exists
\[
(u_\infty, \Lambda_\infty) \in \, W_{\mathrm B}^{k,\infty}(\Omega;\mathbb R^N) \times (0,\infty),
\]
and a sequence \((p_j)_{j\ge1}\subseteq (n/\alpha,\infty)\) with \(p_j\to\infty\), such that
\[
\begin{cases}
u_{p_j}\to u_\infty & \text{in } C^{k-1}(\overline\Omega;\mathbb R^N),\\[2mm]
D^ku_{p_j}\rightharpoonup^\ast D^ku_\infty & \text{in } L^\infty(\Omega;\mathbb R_s^{N\times n^k}),\\[2mm]
D^ku_{p_j}\rightharpoonup D^ku_\infty & \text{in } L^q(\Omega;\mathbb R_s^{N\times n^k}) \quad \text{for every } q\in (1,\infty),\\[2mm]
\Lambda_{p_j}\to \Lambda_\infty &
\end{cases}
\]
as $j\to\infty$. Moreover, \(u_\infty\) is a solution of the constrained \(L^\infty\) minimisation problem
\eqref{minimisation problem}, and
\[
\Lambda_\infty=\|f(D^ku_\infty)\|_{L^\infty(\Omega)}.
\]
\end{proposition}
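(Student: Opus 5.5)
The plan is to compare $u_p$ with normalised competitors, obtain uniform bounds, extract a limit, and then identify it as a minimiser.

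\textbf{Step 1: a uniform energy bound.} Fix any $v\in W_{\mathrm B}^{k,\infty}(\Omega;\mathbb R^N)$ with $\|g(v,\ldots,D^{k-1}v)\|_{L^\infty(\Omega)}=1$; such a $v$ exists by normalising any nonzero map via Lemma~\ref{lemma2}. Lemma~\ref{lemma2} then gives $t_p\to t_\infty=1$ such that $t_pv$ is admissible for the $L^p$ problem. Minimality of $u_p$ yields
\[
L_p\le \|f(t_pD^kv)\|_{L^p(\Omega)}\le \|f(t_pD^kv)\|_{L^\infty(\Omega)}.
\]
The right-hand side is bounded uniformly in $p$, and it tends to $\|f(D^kv)\|_{L^\infty(\Omega)}$ by continuity of $f$. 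Here $L^p$ norms are normalised averages, so they are monotone in $p$.

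\textbf{Step 2: compactness.} For fixed $q<\infty$ and $p\ge q$, Hölder's inequality in averaged form gives $\|f(D^ku_p)\|_{L^q}\le L_p\le C$. The growth bound \eqref{growth-a} then gives $\|D^ku_p\|_{L^{\alpha q}}\le C(q)$, and this constant is in fact independent of $q$. Combining this with the Poincaré chains from Lemma~\ref{lemma3} (Poincaré--Wirtinger plus Gauss--Green at order $k-1$ in the hinged case), we get $\|u_p\|_{W^{k,\alpha q}}\le C(q)$. A diagonal argument over $q\in\mathbb N$ produces $p_j\to\infty$ and a limit $u_\infty$ with weak convergence in each $W^{k,q}$. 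Since $\alpha q>n$, Morrey's embedding upgrades this to convergence in $C^{k-1}(\overline\Omega)$. Lower semicontinuity of norms gives $\|D^ku_\infty\|_{L^{\alpha q}}\le C$ uniformly in $q$, so $D^ku_\infty\in L^\infty$ and weak$^*$ convergence holds in $L^\infty$. The boundary traces pass to the limit because $W_{\mathrm B}^{k,q}$ is weakly closed, so $u_\infty\in W_{\mathrm B}^{k,\infty}$.

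Lemma~\ref{lemma5} gives $\Lambda_p$ comparable to $L_p$ up to factors tending to $1$, so $\Lambda_p$ is bounded. Passing to a further subsequence, $\Lambda_{p_j}\to\Lambda_\infty$ and $L_{p_j}\to\Lambda_\infty$.

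\textbf{Step 3: the constraint and minimality.} Since $g_p\to g_\infty$ uniformly, we have $\|g_p\|_{L^{p}}=\|g_\infty\|_{L^p}+o(1)$. This tends to $\|g_\infty\|_{L^\infty}$, so $\|g_\infty\|_{L^\infty}=1$ and $u_\infty$ is admissible; in particular $u_\infty\ne0$.

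Fix $q$. Weak lower semicontinuity of $v\mapsto\|f(D^kv)\|_{L^q}$ (from quasiconvexity, as in Lemma~\ref{lemma3}), together with Hölder, gives
\[
\|f(D^ku_\infty)\|_{L^q}\le\liminf_j\|f(D^ku_{p_j})\|_{L^q}\le\liminf_j L_{p_j}=\Lambda_\infty.
\]
Letting $q\to\infty$ yields $\|f(D^ku_\infty)\|_{L^\infty}\le\Lambda_\infty$. By Step 1 we also have $\Lambda_\infty\le\|f(D^kv)\|_{L^\infty}$ for every admissible $v$. Taking $v=u_\infty$ gives equality, and the same inequality shows that $u_\infty$ minimises \eqref{minimisation problem}.

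For positivity, $\Lambda_\infty=\|f(D^ku_\infty)\|_{L^\infty}$, and this is positive. Otherwise $D^ku_\infty=0$, and the Poincaré chain forces $u_\infty=0$, contradicting the constraint.

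\textbf{Expected obstacle.} The main delicate point is Step 2. We need bounds uniform in $p$ in every $L^q$, with the Poincaré constants allowed to depend on $q$ but not on $p$, together with the diagonal extraction. A further subtlety in Step 3 is applying lower semicontinuity at a fixed exponent $q$ rather than at $p$.
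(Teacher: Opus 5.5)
Your proposal is correct and follows the paper's proof essentially step by step: normalised competitors from Lemma~\ref{lemma2}, uniform $W^{k,\alpha q}$ bounds via the growth condition and the Poincar\'e chains, a diagonal extraction with Morrey's embedding, passage of the constraint to the limit through uniform convergence, and lower semicontinuity at a fixed exponent $q$ before sending $q\to\infty$. Your explicit squeeze $\limsup_j L_{p_j}\le\|f(D^ku_\infty)\|_{L^\infty(\Omega)}\le\liminf_j L_{p_j}$ (using $u_\infty$ itself as a competitor), and your remark that the averaged $L^{\alpha q}$ bound on $D^ku_p$ is independent of $q$, justify two steps the paper only asserts, namely $L_{p_j}\to\Lambda_\infty$ and $D^ku_\infty\in L^\infty$.
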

\begin{proof}
Fix \(p>n/\alpha\), and let \(u_p\) be the minimiser obtained in Lemma~\ref{lemma3}.
Also fix a map
\[
v_0\in W_{\mathrm B}^{k,\infty}(\Omega;\mathbb R^N)\setminus\{0\}.
\]
By Lemma~\ref{lemma2}, there exists \(t_p>0\) such that
\[
\big\|g\big(t_pv_0,t_pDv_0,\ldots,t_pD^{k-1}v_0\big) \big\|_{L^p(\Omega)}=1,
\]
and \(t_p\to t_\infty\) as \(p\to\infty\). Hence \(t_pv_0\) is admissible for the
\(L^p\)-problem.

By minimality of \(u_p\),
\[
\|f(D^ku_p)\|_{L^p(\Omega)}
\le
\|f(t_pD^kv_0)\|_{L^p(\Omega)}.
\]
Since \(t_p\to t_\infty\) and \(v_0\in W_{\mathrm B}^{k,\infty}\), the right-hand side is uniformly bounded in \(p\). Therefore
\[
\sup_{p>n/\alpha}\|f(D^ku_p)\|_{L^p(\Omega)}<\infty.
\]
Using Assumption~\ref{f}\,\eqref{growth-a}, for every fixed \(q\in(1,\infty)\) and every \(p\ge q\) we obtain
\[
\sup_{p\ge q}\|D^ku_p\|_{L^{\alpha q}(\Omega)}<\infty.
\]
Arguing exactly as in the coercivity part of Lemma~\ref{lemma3}, the boundary conditions imply
\[
\sup_{p\ge q}\|u_p\|_{W^{k,\alpha q}(\Omega)}<\infty.
\]

Now fix \(q>n/\alpha\). Since \(W^{k,\alpha q}(\Omega;\mathbb R^N)\) is reflexive, after passing to a subsequence we may assume
\[
u_p \rightharpoonup u_\infty^{(q)}
\qquad\text{weakly in }W^{k,\alpha q}(\Omega;\mathbb R^N).
\]
Because \(\alpha q>n\), Morrey's embedding gives compactness into \(C^{k-1}(\overline\Omega;\mathbb R^N)\). A diagonal argument then provides
a map
\[
u_\infty \in \bigcap_{q<\infty} W_{\mathrm B}^{k,\alpha q}(\Omega;\mathbb R^N)
\]
and a sequence \(p_j\to\infty\) such that
\[
u_{p_j}\to u_\infty \quad \text{in } C^{k-1}(\overline\Omega;\mathbb R^N),
\]
and
\[
D^ku_{p_j}\rightharpoonup D^ku_\infty \quad \text{weakly in } L^{\alpha q}(\Omega;\mathbb R_s^{N\times n^k})
\]
for every finite \(q\). In particular,
\[
u_\infty\in W_{\mathrm B}^{k,\infty}(\Omega;\mathbb R^N).
\]

We next show that \(u_\infty\) is admissible. Since
\[
\big\|g\big(u_{p_j},Du_{p_j},\ldots,D^{k-1}u_{p_j}\big)\big\|_{L^{p_j}(\Omega)}=1
\]
for every \(j\), and since
\[
\big(u_{p_j},Du_{p_j},\ldots,D^{k-1}u_{p_j}\big)\to \big(u_\infty,Du_\infty,\ldots,D^{k-1}u_\infty \big)
\quad\text{uniformly on }\overline\Omega,
\]
continuity of \(g\) gives
\[
g\big(u_{p_j},Du_{p_j},\ldots,D^{k-1}u_{p_j}\big)
\to
g\big(u_\infty,Du_\infty,\ldots,D^{k-1}u_\infty\big)
\quad\text{uniformly on }\overline\Omega.
\]
Passing to the limit in the \(L^{p_j}\)-norm and using the standard convergence of \(L^{p_j}\)-norms to the \(L^\infty\)-norm, we obtain
\[
\big\| g\big(u_\infty,Du_\infty,\ldots,D^{k-1}u_\infty \big)\big\|_{L^\infty(\Omega)}=1.
\]
Thus \(u_\infty\) lies in the admissible class of \eqref{minimisation problem}.

Now let \(v\in W_{\mathrm B}^{k,\infty}(\Omega;\mathbb R^N)\) be any admissible competitor, i.e.
\[
\big\|g\big(v,Dv,\ldots,D^{k-1}v \big)\big\|_{L^\infty(\Omega)}=1.
\]
By Lemma~\ref{lemma2}, there exists \(t_p\to 1\) such that
\[
\big\|g\big(t_pv,t_pDv,\ldots,t_pD^{k-1}v\big)\big\|_{L^p(\Omega)}=1
\]
for all \(p>n/\alpha\). By minimality of \(u_p\),
\[
\|f(D^ku_p)\|_{L^p(\Omega)}
\le
\|f(t_pD^kv)\|_{L^p(\Omega)}.
\]
Fix any \(q\in(1,\infty)\). By H\"older's inequality and the above,
\[
\|f(D^ku_{p_j})\|_{L^q(\Omega)}
\le
\|f(D^ku_{p_j})\|_{L^{p_j}(\Omega)}
\le
\|f(t_{p_j}D^kv)\|_{L^{p_j}(\Omega)}.
\]

By the weak lower semicontinuity of the functional on \(W_{\mathrm B}^{k,\alpha q}(\Omega;\mathbb R^N)\), we may let \(p_j\to\infty\) to obtain
\[
\|f(D^ku_\infty)\|_{L^q(\Omega)}
\le
\liminf_{p_j\to\infty} \|f(D^ku_{p_j})\|_{L^{p_j}(\Omega)}
\le
\|f(D^kv)\|_{L^\infty(\Omega)}.
\]
Letting \(q\to\infty\) gives
\[
\|f(D^ku_\infty)\|_{L^\infty(\Omega)}
\le
\|f(D^kv)\|_{L^\infty(\Omega)}. 
\]

Since \(v\) was arbitrary, \(u_\infty\) minimises \eqref{minimisation problem}. We now define
\[
\Lambda_\infty:=\|f(D^ku_\infty)\|_{L^\infty(\Omega)}.
\]
Clearly \(\Lambda_\infty\ge 0\). In fact \(\Lambda_\infty>0\), because the only map in
\(W_{\mathrm B}^{k,\infty}(\Omega;\mathbb R^N)\) satisfying \(f(D^ku)\equiv 0\) is the trivial map \(u\equiv 0\), while
\(u\equiv 0\) is not admissible since \(g(0,\ldots,0)=0\). Finally, by Lemma~\ref{lemma5},
\[
\left(\frac{C_1}{C_8}\right)^{1/p_j}L_{p_j}
\le
\Lambda_{p_j}
\le
\left(\frac{C_2}{C_7}\right)^{1/p_j}L_{p_j},
\]
where
\[
L_{p_j}:=\|f(D^ku_{p_j})\|_{L^{p_j}(\Omega)}.
\]

Since \(L_{p_j}\to \|f(D^ku_\infty)\|_{L^\infty(\Omega)}=\Lambda_\infty\), we conclude that
\[
\Lambda_{p_j}\to \Lambda_\infty,
\]
as $j\to\infty$. This completes the proof.
\end{proof}

\begin{remark}
The derivation of an explicit geometric upper bound for \(\Lambda_\infty\) in the \(k\)-th order case requires a genuine \(k\)-th order adaptation of the boundary test-function construction used in the second-order setting of \cite{clark2024generalized}. In particular, one needs estimates for \(D^k(d_\Omega^m\zeta)\) for an appropriate power \(m\), which in turn require suitable \(C^k\)-regularity and tubular-neighbourhood control for the distance function \(d_\Omega\). Since this analysis grows substantially more involved with the order \(k\), and we do not have an immediate application of this explicit bound, we refrain from including the relevant estimate in this work.
\end{remark}

We now establish the lower bound on the eigenvalues.

\begin{proposition}[Lower bound for the eigenvalue]\label{lower bound}
Under the hypotheses of Theorem~\ref{main theorem}, the eigenvalue
\(\Lambda_\infty=\|f(D^ku_\infty)\|_{L^\infty(\Omega)}\) satisfies
\[
\Lambda_\infty \geq \left( C_4  \Bigg\{\displaystyle\sum_{j=0}^{k-1}
 \bigg( \prod_{l=j}^{k-1} C_{l}(\Omega) \bigg)\|\partial_{P_j} g\|_{L^\infty(\{g\le1\})} \Bigg\}^{\!\!-\alpha}
-C_3
\right)^+,
\]
where \(\partial_{P_0}g \equiv \partial_\eta g\), and \((\cdot)^+\) denotes the positive part. 

In the above estimate, \(C_0(\Omega),\ldots,C_{k-2}(\Omega)\) denote constants arising from the associated Poincar\'e inequality applied to the derivatives of corresponding order, and $C_{k-1}(\Omega)$ denotes the constant arising from either the Poincar\'e, or the Poincar\'e-Wirtinger inequalities applied to the derivatives of $(k-1)$-order, depending on the type of boundary data (clamped/hinged).
\end{proposition}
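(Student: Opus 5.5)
The strategy is to bound $\|D^k u_\infty\|_{L^\infty}$ from below, using the constraint, a mean-value argument, and iterated Poincaré inequalities in $L^\infty$. The growth bound \eqref{growth-a} then converts this into a lower bound for $\Lambda_\infty$.

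Since $u_\infty$ is admissible, $\|g(u_\infty,\dots,D^{k-1}u_\infty)\|_{L^\infty(\Omega)}=1$. All entries $u_\infty,\dots,D^{k-1}u_\infty$ are continuous on $\overline\Omega$, so $g_\infty$ is continuous on $\overline\Omega$. Hence there is a point $x_0\in\overline\Omega$ with $g_\infty(x_0)=1$. At the boundary, in both the clamped and hinged cases, $u_\infty=\dots=D^{k-2}u_\infty=0$. In the clamped case $D^{k-1}u_\infty=0$ as well, so $g_\infty=0$ there; in the hinged case we only use continuity.

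Next we apply the fundamental theorem of calculus along the ray $s\mapsto s\,(u_\infty,\dots,D^{k-1}u_\infty)(x_0)$ for $s\in[0,1]$, using $g(0)=0$:
\[
1=g_\infty(x_0)=\int_0^1\sum_{j=0}^{k-1}\partial_{P_j}g\big(s\,(u_\infty,\dots,D^{k-1}u_\infty)(x_0)\big):D^ju_\infty(x_0)\,ds .
\]
By the monotonicity proved in Lemma~\ref{lemma2} (the map $s\mapsto g(s\,\cdot)$ is nondecreasing), every point on this segment lies in $\{g\le1\}$. Therefore
\[
1\le\sum_{j=0}^{k-1}\|\partial_{P_j}g\|_{L^\infty(\{g\le1\})}\,\|D^ju_\infty\|_{L^\infty(\Omega)} .
\]

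We then chain $L^\infty$ Poincaré inequalities. For $j\le k-2$, $D^ju_\infty$ vanishes on $\partial\Omega$, so $\|D^ju_\infty\|_{L^\infty}\le C_j(\Omega)\|D^{j+1}u_\infty\|_{L^\infty}$. For $j=k-1$, we use either Poincaré (clamped case) or Poincaré–Wirtinger (hinged case). In the hinged case the mean of $D^{k-1}u_\infty$ vanishes by the Gauss–Green argument from Lemma~\ref{lemma3}, which gives $\|D^{k-1}u_\infty\|_{L^\infty}\le C_{k-1}(\Omega)\|D^ku_\infty\|_{L^\infty}$. Iterating,
\[
\|D^ju_\infty\|_{L^\infty}\le\Big(\prod_{l=j}^{k-1}C_l(\Omega)\Big)\|D^ku_\infty\|_{L^\infty},
\]
and hence $\|D^ku_\infty\|_{L^\infty}\ge\{\sum_j(\prod_{l\ge j}C_l)\|\partial_{P_j}g\|_{L^\infty(\{g\le1\})}\}^{-1}$.

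Finally, \eqref{growth-a} gives $f(D^ku_\infty)\ge -C_3+C_4|D^ku_\infty|^\alpha$ a.e. Taking the essential supremum yields $\Lambda_\infty\ge C_4\|D^ku_\infty\|_{L^\infty}^\alpha-C_3$, and since $\Lambda_\infty\geq 0$ trivially, we may take the positive part. Two caveats on this last step. First, as written the final display carries exponent $-\alpha$ on the brace, which matches substituting the bound above into $\|D^ku_\infty\|^\alpha$. Second, if the sum is infinite the bound is trivial.

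The main obstacle is justifying the $L^\infty$ (rather than $L^p$) Poincaré inequalities with constants $C_l(\Omega)$, especially the Poincaré–Wirtinger version in the hinged case. I would obtain these as $p\to\infty$ limits of $L^p$ versions, or directly via Lipschitz estimates along paths in $\Omega$; the latter needs Lipschitz regularity of $\partial\Omega$, which makes $\Omega$ quasiconvex. The constants are then simply defined to be the optimal ones.
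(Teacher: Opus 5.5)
Your proposal is correct and follows the paper's proof step for step. The steps are the same: a maximum point $x_0\in\overline\Omega$ with $g_\infty(x_0)=1$, and the fundamental theorem of calculus along the segment from the origin, which stays in $\{g\le1\}$ by the radial monotonicity from Lemma~\ref{lemma2}. Then the chained $L^\infty$ Poincar\'e inequalities, with Poincar\'e--Wirtinger plus the Gauss--Green mean-zero argument at order $k-1$ in the hinged case, and finally the lower growth bound \eqref{growth-a} followed by taking the positive part. Your caveat about an infinite sum is unnecessary: coercivity of $g$ makes $\{g\le1\}$ compact, so every $\|\partial_{P_j}g\|_{L^\infty(\{g\le1\})}$ is finite, and the inequality $1\le\sum_j\cdots$ forces the brace to be strictly positive.
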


\begin{proof}
Since \(u_\infty\in W_{\mathrm B}^{k,\infty}(\Omega;\mathbb R^N)\) and
\(\alpha p>n\) for all \(p\) in the approximating sequence, Morrey's
embedding theorem gives
\(u_\infty\in C^{k-1}(\overline\Omega;\mathbb R^N)\). SInce \(g\) is
continuous and \(\overline\Omega\) is compact, the function
\[
x\mapsto g\bigl(u_\infty(x),Du_\infty(x),\ldots,D^{k-1}u_\infty(x)\bigr)
\]
attains its supremum on \(\overline\Omega\). The admissibility constraint
\[
\big\|g\big(u_\infty,Du_\infty,\ldots,D^{k-1}u_\infty\big)
\big\|_{L^\infty(\Omega)}=1
\]
therefore provides a point \(x_0\in\overline\Omega\) at which
\begin{equation}\label{lb:max}
g\bigl(u_\infty(x_0),Du_\infty(x_0),\ldots,D^{k-1}u_\infty(x_0)\bigr)=1,
\end{equation}
which is possible due to continuous differentiability of $u_\infty$ up to order $k-1$. For brevity we set
\[
(\eta^0,P_1^0,\ldots,P_{k-1}^0)
:=
\bigl(u_\infty(x_0),Du_\infty(x_0),\ldots,D^{k-1}u_\infty(x_0)\bigr).
\]
Since \(g(0,\ldots,0)=0\) and \(g\in C^1\), the fundamental theorem of
calculus along the line segment from the origin gives
\[
g(\eta^0,P_1^0,\ldots,P_{k-1}^0)
=
\int_0^1
\frac{d}{dt}\,\Big(
g\bigl(t\eta^0,\,tP_1^0,\ldots,tP_{k-1}^0\bigr)\Big)\,dt.
\]
Computing the derivative in \(t\) yields
\begin{equation}\label{lb:ftc}
g(\eta^0,P_1^0,\ldots,P_{k-1}^0)
=
\int_0^1
\bigg(
\partial_\eta g_t\cdot\eta^0
+
\sum_{j=1}^{k-1}\partial_{P_j} g_t:P_j^0
\bigg)\,dt,
\end{equation}
where we abbreviate \(g_t:=g(t\eta^0,tP_1^0,\ldots,tP_{k-1}^0)\), and likewise for its
partial derivatives.

We claim that for every \(t\in[0,1]\) the point \((t\eta^0,tP_1^0,\ldots,tP_{k-1}^0)\) lies in the sublevel set \(\{g\le1\}\). Indeed, as shown in the proof of Lemma~\ref{lemma2}, the
map
\[
s\mapsto g\big(s\eta^0,sP_1^0,\ldots,sP_{k-1}^0\big)
\]
is nondecreasing on \([0,\infty)\)
(this follows from Assumption~\ref{g}\,\ref{partial}). Hence, for
\(0\le t\le 1\),
\[
g\big(t\eta^0,tP_1^0,\ldots,tP_{k-1}^0\big)
\le
g\big(\eta^0,P_1^0,\ldots,P_{k-1}^0\big)=1.
\]
Consequently, the integrand in \eqref{lb:ftc} can be estimated by
\[
\bigg|
\partial_\eta g_t\cdot\eta^0
+
\sum_{j=1}^{k-1}\partial_{P_j}g_t:P_j^0
\bigg|
\le
\|\partial_\eta g\|_{L^\infty(\{g\le1\})}|\eta^0|
+
\sum_{j=1}^{k-1}
\|\partial_{P_j} g\|_{L^\infty(\{g\le1\})}|P_j^0|,
\]
for $t\in [0,1]$. Integrating over \(t\in[0,1]\) and using \eqref{lb:max}, we obtain the estimate
\begin{equation}
\label{lb:bound-g}
\begin{split}
1
& =g(\eta^0,P_1^0,\ldots,P_{k-1}^0)
\\
&=
\bigg| \int_0^1
\bigg(
\partial_\eta g_t\cdot\eta^0
+
\sum_{j=1}^{k-1}\partial_{P_j} g_t:P_j^0
\bigg)\,dt \bigg|
\\
&\leq \max_{t\in [0,1]} \bigg|
\partial_\eta g_t\cdot\eta^0
+
\sum_{j=1}^{k-1}\partial_{P_j} g_t:P_j^0
\bigg|
\\
&\leq
\sum_{j=0}^{k-1}
\|\partial_{P_j} g\|_{L^\infty(\{g\le1\})}\,
\|D^ju_\infty\|_{L^\infty(\Omega)},
\end{split}
\end{equation}
where we have symbolised  \(\partial_{P_0}g \equiv \partial_\eta g\), and used the
fact that \(|D^ju_\infty(x_0)|\le\|D^ju_\infty\|_{L^\infty(\Omega)}\) for $j=0,1,\ldots,k-1$ (recall that $u_\infty \in C^{k-1}(\overline{\Omega};\mathbb R^N)$).

We now bound the lower-order norms in terms of
\(\|D^ku_\infty\|_{L^\infty(\Omega)}\). In either boundary class, we have \(D^ju_\infty=0\) on \(\partial\Omega\) for \(j=0,\ldots,k-2\). Therefore, by the Poincar\'e inequality applied
successively, there exist constants \(C_0(\Omega),\ldots,C_{k-2}(\Omega)>0\)
such that
\begin{equation}\label{lb:poincare-lower}
\|D^ju_\infty\|_{L^\infty(\Omega)}
\le
C_j(\Omega)\,\|D^{j+1}u_\infty\|_{L^\infty(\Omega)},
\qquad j=0,\ldots,k-2.
\end{equation}
For the term \(D^{k-1}u_\infty\), we distinguish the two boundary
classes.

\medskip
\noindent\emph{\underline{Clamped case}.}
In this case \(D^{k-1}u_\infty=0\) on \(\partial\Omega\). By the Poincar\'e inequality, there exists $C_{k-1}(\Omega)>0$ such that
\[
|D^{k-1}u_\infty(x)|
\le
\|D^{k-1} u_\infty\|_{L^\infty(\Omega)}
\le
C_{k-1}(\Omega)\,\|D^ku_\infty\|_{L^\infty(\Omega)}.
\]

\noindent\emph{\underline{Hinged case}.}
Here \(D^{k-2}u_\infty=0\) on \(\partial\Omega\), so by the
Gauss--Green theorem (as in the proof of Lemma~\ref{lemma3}),
\[
\fint_\Omega D^{k-1}u_\infty\,d\mathcal L^n
=
\fint_{\partial\Omega} D^{k-2}u_\infty\otimes\nu\,d\mathcal H^{n-1}
=0.
\]
Hence, by the Poincar\'e-Wirtinger inequality, there exists $C_{k-1}(\Omega)>0$ such that
\[
\begin{split}
\big| D^{k-1}u_\infty(x) \big| &\leq \big\|D^{k-1} u_\infty \big\|_{L^\infty(\Omega)}
\\
&= \bigg\| D^{k-1}u - \fint_\Omega D^{k-1}u_\infty\,d\mathcal L^n\bigg\|_{L^\infty(\Omega)}
\\
& \le
C_{k-1}(\Omega)\,\|D^ku_\infty\|_{L^\infty(\Omega)}.
\end{split}
\]
In both cases we therefore have
\begin{equation}\label{lb:top-order}
\|D^{k-1}u_\infty\|_{L^\infty(\Omega)}
\le
C_{k-1}(\Omega) \,\|D^ku_\infty\|_{L^\infty(\Omega)},
\end{equation}
where $C_{k-1}(\Omega)$ is the constant arising either from the Poincar\'e, or the Poicar\'e-Wirtinger inequality, depending on the type of boundary condition (clamped/hinged). By iterating  \eqref{lb:poincare-lower} and \eqref{lb:top-order}, we obtain the estimate
\begin{equation}
\label{bound}
\begin{split}
\|D^j u_\infty\|_{L^\infty(\Omega)} &\leq C_{j}(\Omega) \,\|D^{j+1}u_\infty\|_{L^\infty(\Omega)}
\\
&\leq C_{j}(\Omega)  C_{j+1}(\Omega) \,\|D^{j+2}u_\infty\|_{L^\infty(\Omega)}
\\
& \ \  \vdots
\\
&\leq \bigg( \prod_{l=j}^{k-1} C_{l}(\Omega) \bigg) \,\|D^k u_\infty\|_{L^\infty(\Omega)}.
\end{split}
\end{equation}
Substituting \eqref{bound} into \eqref{lb:bound-g} yields
\begin{equation}
\label{lb:Dk-lower}
\begin{split}
1 &\leq
\sum_{j=0}^{k-1}
\|\partial_{P_j} g\|_{L^\infty(\{g\le1\})}\,
\|D^ju_\infty\|_{L^\infty(\Omega)}
\\
&\leq \Bigg\{ \sum_{j=0}^{k-1}
\|\partial_{P_j} g\|_{L^\infty(\{g\le1\})} \bigg( \prod_{l=j}^{k-1} C_{l}(\Omega) \bigg) \Bigg\} \|D^k u_\infty\|_{L^\infty(\Omega)}
\end{split}
\end{equation}
Finally, by Assumption~\ref{f}\,\eqref{growth-a}, for a.e.\
\(x\in\Omega\),
\[
f\bigl(D^ku_\infty(x)\bigr)
\ge
-C_3+C_4\,|D^ku_\infty(x)|^\alpha.
\]
Taking the essential supremum over \(x\in\Omega\) gives
\[
\Lambda_\infty
=
\|f(D^ku_\infty)\|_{L^\infty(\Omega)}
\ge
-C_3+C_4\,\|D^ku_\infty\|_{L^\infty(\Omega)}^\alpha.
\]
Combining with \eqref{lb:Dk-lower} and taking the positive part, we
conclude that
\[
\Lambda_\infty
\ge
\left( C_4  \Bigg\{\displaystyle\sum_{j=0}^{k-1}
 \bigg( \prod_{l=j}^{k-1} C_{l}(\Omega) \bigg)\|\partial_{P_j} g\|_{L^\infty(\{g\le1\})} \Bigg\}^{\!\!-\alpha}
-C_3
\right)^+,
\]
as claimed.
\end{proof}

The following lemma establishes the compactness and convergence of the measures $\nu_p$ and $\mathbf{M}_p$ arising from the $L^p$ approximations. The limits $\nu_\infty$ and $\mathbf{M}_\infty$ are precisely those appearing in the limiting PDE system of Theorem~\ref{main theorem}(B). It ensures weak* compactness in the space of Radon measures, which is essential for passing to the limit in the Euler--Lagrange equations.

\begin{lemma}\label{lemma7}
Let \(p>(n/\alpha)+2\). Then, along a subsequence \((p_j)_{j\ge1}\) with \(p_j\to\infty\), there exist measures
\[
\nu_\infty \in \mathcal M(\overline{\Omega}),
\qquad
\mathbf M_\infty \in \mathcal M\bigl(\overline{\Omega};\mathbb R_s^{N\times n^k}\bigr),
\]
such that
\[
\begin{cases}
\nu_{p_j}\stackrel{*}{\rightharpoonup}\nu_\infty
& \text{in }\mathcal M(\overline{\Omega}),\\[2mm]
\mathbf M_{p_j}\stackrel{*}{\rightharpoonup}\mathbf M_\infty
& \text{in }\mathcal M\bigl(\overline{\Omega};\mathbb R_s^{N\times n^k}\bigr),
\end{cases}
\]
as $j\to\infty$, where the approximating measures \(\nu_p\) and \(\mathbf M_p\) are defined by \eqref{1.12}.
\end{lemma}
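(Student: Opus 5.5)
The plan is to apply the Banach--Alaoglu theorem in the space of Radon measures. We regard \(\mathcal M(\overline\Omega)\) and \(\mathcal M(\overline\Omega;\mathbb R_s^{N\times n^k})\) as duals of \(C(\overline\Omega)\) and \(C(\overline\Omega;\mathbb R_s^{N\times n^k})\), which are separable since \(\overline\Omega\) is compact. Bounded sets are therefore sequentially weak* compact. It thus suffices to show
\[
\sup_{p} \|\nu_p\|(\overline\Omega)<\infty,\qquad \sup_p\|\mathbf M_p\|(\overline\Omega)<\infty
\]
along the sequence \((p_j)\) of Proposition~\ref{proposition6}. We then extract a common further subsequence by taking successive subsequences, first for \(\nu_p\) and then for \(\mathbf M_p\).

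\emph{Bound for \(\nu_p\).} Since \(g\ge0\), Hölder's inequality with exponents \(p/(p-1)\) and \(p\), together with the constraint \(\fint_\Omega g_p^p=1\), gives
\[
\|\nu_p\|(\overline\Omega)=\frac{1}{\mathcal L^n(\Omega)}\int_\Omega g_p^{p-1}\,d\mathcal L^n=\fint_\Omega g_p^{p-1}\le\Big(\fint_\Omega g_p^{p}\Big)^{(p-1)/p}=1.
\]
This step is immediate.

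\emph{Bound for \(\mathbf M_p\).} The main obstacle is the total variation of \(\mathbf M_p\), because \(\partial f\) is only controlled through \eqref{growth-b}, namely \(|\partial f(X)|\le C_5 f(X)^\beta+C_6\) with \(\beta\le1\). Using this, we estimate
\[
\|\mathbf M_p\|(\overline\Omega)\le \fint_\Omega\Big(\frac{f(D^ku_p)}{\Lambda_p}\Big)^{p-1}\big(C_5 f(D^ku_p)^\beta+C_6\big)\,d\mathcal L^n .
\]
Bound \(f^\beta\le 1+f\), which is valid for \(\beta\le1\) and \(f\ge0\), and write \(f=\Lambda_p\,(f/\Lambda_p)\). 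This reduces the estimate to controlling \(\fint (f/\Lambda_p)^{p-1}\) and \(\Lambda_p\fint(f/\Lambda_p)^{p}\). For the second quantity, \(\fint (f/\Lambda_p)^p = L_p^p/\Lambda_p^p\), which by Lemma~\ref{lemma5} lies between \(C_7/C_2\) and \(C_8/C_1\). By Hölder, \(\fint (f/\Lambda_p)^{p-1}\le (L_p/\Lambda_p)^{p-1}\), which by Lemma~\ref{lemma5} is at most \((C_8/C_1)^{(p-1)/p}\le \max\{1,C_8/C_1\}\). Finally, \(\Lambda_p\to\Lambda_\infty\in(0,\infty)\) by Proposition~\ref{proposition6}, so \(\Lambda_p\) is bounded above along the sequence. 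Together these give a \(p\)-independent bound.

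The restriction \(p>(n/\alpha)+2\) only ensures that we lie in the regime where Lemmas~\ref{lemma3}–\ref{lemma5} apply and that \(p-1>1\). No further use of it should be needed. The one point I would double-check is that the case of negative \(\beta\) is also covered by the pointwise bound \(f^\beta\le 1+f\). When \(\beta<0\) this bound fails near \(f=0\). In that case I would instead use the continuity of \(\partial f\), which gives \(|\partial f|\) bounded on \(\{f\le 1\}\) because this set is bounded by \eqref{growth-a}. On \(\{f>1\}\) we have \(f^\beta\le1\). This again yields \(|\partial f|\le C(1+f)\) and the same estimate goes through.
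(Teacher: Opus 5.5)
Your proposal is correct and follows essentially the paper's route: you prove uniform total-variation bounds (Hölder plus the constraint for $\nu_p$; \eqref{growth-b}, Hölder and Lemma~\ref{lemma5} together with the boundedness of $\Lambda_{p_j}$ for $\mathbf M_p$) and then extract successive subsequences by sequential weak$^*$ compactness. The only cosmetic difference is that you use $f^\beta\le 1+f$ before applying Hölder, whereas the paper applies Hölder directly to $f^{p-1+\beta}$. Your separate treatment of $\beta<0$ is harmless but unnecessary: \eqref{1.3c} and \eqref{growth-a} give $|\partial f(X)|\gtrsim|X|^{\alpha-1}$ for large $|X|$, which forces $\beta\ge 1-1/\alpha>0$.
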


\begin{proof}
We first establish a uniform bound for \(\nu_p\). Since \(g\ge0\) and
\[
\big\|g\big(u_p,Du_p,\ldots,D^{k-1}u_p\big)\big\|_{L^p(\Omega)}=1,
\]
it follows from \eqref{1.12} that
\[
\|\nu_p\|(\overline{\Omega})
=
\nu_p(\overline{\Omega})
=
\fint_\Omega g\big(u_p,Du_p,\ldots,D^{k-1}u_p \big)^{p-1}\,d\mathcal L^n.
\]
By H\"older's inequality,
\[
\fint_\Omega g\big(u_p,Du_p,\ldots,D^{k-1}u_p\big)^{p-1}\,d\mathcal L^n
\le
\left(\,
\fint_\Omega g\big(u_p,Du_p,\ldots,D^{k-1}u_p\big)^p\,d\mathcal L^n
\right)^{\frac{p-1}{p}}
=1.
\]
Hence
\[
\sup_{p>(n/\alpha)+2}\|\nu_p\|(\overline{\Omega})\le 1.
\]
By weak\(^*\) sequential compactness in \(\mathcal M(\overline{\Omega})\), there exists a subsequence, still denoted by \((p_j)\), and a measure
\[
\nu_\infty\in \mathcal M(\overline{\Omega})
\]
such that
\[
\nu_{p_j}\stackrel{*}{\rightharpoonup}\nu_\infty
\qquad\text{in }\mathcal M(\overline{\Omega}),
\]

as $j\to\infty$. We next prove a uniform bound for \(\mathbf M_p\). By definition,
\[
\mathbf M_p
=
\frac{1}{\mathcal L^n(\Omega)}
\left(\frac{f(D^ku_p)}{\Lambda_p}\right)^{p-1}
\partial f(D^ku_p)\,\mathcal L^n\!\llcorner\Omega.
\]
Therefore,
\[
\|\mathbf M_p\|(\overline{\Omega})
=
\fint_\Omega
\left(\frac{f(D^ku_p)}{\Lambda_p}\right)^{p-1}
|\partial f(D^ku_p)|\,d\mathcal L^n.
\]
Using Assumption~\ref{f}\,\eqref{growth-b}, we have
\[
|\partial f(X)|\le C_5 f(X)^\beta + C_6,
\qquad X\in\mathbb R_s^{N\times n^k}.
\]
Hence
\[
\begin{aligned}
\|\mathbf M_p\|(\overline{\Omega})
&\le
\frac{C_5}{\Lambda_p^{p-1}}
\fint_\Omega f(D^ku_p)^{p-1+\beta}\,d\mathcal L^n
+
\frac{C_6}{\Lambda_p^{p-1}}
\fint_\Omega f(D^ku_p)^{p-1}\,d\mathcal L^n.
\end{aligned}
\]
Applying H\"older's inequality to each term gives
\[
\|\mathbf M_p\|(\overline{\Omega})
\le
\left(\frac{L_p}{\Lambda_p}\right)^{p-1}
\bigl(C_5L_p^\beta + C_6\bigr).
\]
By Lemma~\ref{lemma5},
\[
\left(\frac{L_p}{\Lambda_p}\right)^{p-1}
\le
\left(\frac{C_8}{C_1}\right)^{1-\frac1p}.
\]
Moreover, along the subsequence under consideration, \(\Lambda_p\to\Lambda_\infty\), and again by Lemma~\ref{lemma5}, \(L_p\) is bounded. Consequently,
\[
\|\mathbf M_p\|(\overline{\Omega})
\le
\left(\frac{C_8}{C_1}\right)^{1-\frac1p}
\bigl(C_5(\Lambda_\infty+1)^\beta + C_6\bigr),
\]
for all sufficiently large \(p\). Thus \((\mathbf M_p)\) is uniformly bounded in
\(\mathcal M(\overline{\Omega};\mathbb R_s^{N\times n^k})\). By weak\(^*\) sequential compactness once more, after perhaps passing to a further subsequence, there exists
\[
\mathbf M_\infty\in \mathcal M\bigl(\overline{\Omega};\mathbb R_s^{N\times n^k}\bigr)
\]
such that
\[
\mathbf M_{p_j}\stackrel{*}{\rightharpoonup}\mathbf M_\infty
\qquad\text{in }\mathcal M\bigl(\overline{\Omega};\mathbb R_s^{N\times n^k}\bigr),
\]
as $j\to\infty$. This proves the claim.
\end{proof}

Lemma~\ref{lemma8} that follows confirms that the limiting quadruple
\[
(u_\infty, \Lambda_\infty, \mathbf{M}_\infty, \nu_\infty)
\]
satisfies the weak formulation of the PDE system
\begin{align*}
\int_{\overline{\Omega}} D^k\varphi : d\mathbf{M}_\infty &= \Lambda_\infty \int_{\overline{\Omega}} \bigg( \partial_\eta g(u_\infty,Du_\infty,\ldots,D^{k-1}u_\infty)\cdot\varphi \\
&\qquad\qquad + \sum_{j=1}^{k-1}\partial_{P_j} g(u_\infty,Du_\infty,\ldots,D^{k-1}u_\infty):D^j\varphi \bigg)\, d\nu_\infty.
\end{align*}
It establishes that the limiting function $u_\infty$ solves the Euler--Lagrange equations in a measure-valued sense, completing the proof of Theorem~\ref{main theorem}.

\begin{lemma}\label{lemma8}
Let $\mathbf M_\infty \in \mathcal M(\overline\Omega;\mathbb R_s^{N\times n^k})$
and $\nu_\infty\in\mathcal M(\overline\Omega)$ be the measures obtained in
Lemma~\ref{lemma7}. Then the quadruple
\[
(u_\infty,\Lambda_\infty,\mathbf M_\infty,\nu_\infty)
\]
satisfies the limiting weak system
\begin{align*}
\int_{\overline{\Omega}} D^k\phi : d\mathbf M_\infty
&=
\Lambda_\infty
\int_{\overline{\Omega}}
\bigg(
\partial_\eta g_\infty\cdot\phi
+
\sum_{j=1}^{k-1}\partial_{P_j} g_\infty:D^j\phi
\bigg)\,d\nu_\infty ,
\end{align*}
for all test maps $\phi\in C_B^k(\overline\Omega;\mathbb R^N)$, where \(g_\infty:=g(u_\infty,Du_\infty,\ldots,D^{k-1}u_\infty)\).
\end{lemma}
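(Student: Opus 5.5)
The plan is to rewrite the finite-$p$ Euler--Lagrange identity of Lemma~\ref{lemma4} in terms of the measures $\mathbf M_p$ and $\nu_p$ from \eqref{1.12}, and then let $p=p_j\to\infty$ using the convergences from Proposition~\ref{proposition6} and Lemma~\ref{lemma7}.

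\textbf{Step 1: rewriting the identity.} Fix $\phi\in C_B^k(\overline\Omega;\mathbb R^N)$. Then $\phi\in W_B^{k,\alpha p}(\Omega;\mathbb R^N)$ for every $p$, so it is admissible in \eqref{1.11}. Substitute $\lambda_p=\Lambda_p^p$ (Lemma~\ref{lemma5}) and divide both sides of \eqref{1.11} by $\Lambda_p^{p-1}$. This gives
\[
\int_{\overline\Omega} D^k\phi:d\mathbf M_p
=
\Lambda_p\int_{\overline\Omega}\bigg(\partial_\eta g_p\cdot\phi+\sum_{j=1}^{k-1}\partial_{P_j}g_p:D^j\phi\bigg)\,d\nu_p .
\]
Here the averaged integrals $\fint_\Omega$ account exactly for the factor $1/\mathcal L^n(\Omega)$ in \eqref{1.12}. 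We extend $\mathbf M_p$ and $\nu_p$ by zero on $\partial\Omega$, so integrating over $\overline\Omega$ is harmless.

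\textbf{Step 2: passing to the limit along $p_j$.} We pass to a common subsequence on which Proposition~\ref{proposition6} and Lemma~\ref{lemma7} both hold.

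On the left-hand side, $D^k\phi\in C^0(\overline\Omega;\mathbb R_s^{N\times n^k})$ is a legitimate test function for weak$^*$ convergence of measures on the compact set $\overline\Omega$. Hence $\int D^k\phi:d\mathbf M_{p_j}\to\int D^k\phi:d\mathbf M_\infty$.

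On the right-hand side, write the integrand as $\Phi_{p_j}:=\partial_\eta g_{p_j}\cdot\phi+\sum_j\partial_{P_j}g_{p_j}:D^j\phi$, and let $\Phi_\infty$ be the same expression with $g_\infty$. Since $u_{p_j}\to u_\infty$ in $C^{k-1}(\overline\Omega)$ and $\partial g$ is continuous, $\partial g$ is uniformly continuous on a compact set containing the ranges of all $(u_{p_j},\dots,D^{k-1}u_{p_j})$. Therefore $\Phi_{p_j}\to\Phi_\infty$ uniformly on $\overline\Omega$, and $\Phi_\infty$ is continuous. We then split
\[
\int\Phi_{p_j}\,d\nu_{p_j}-\int\Phi_\infty\,d\nu_\infty
=\int(\Phi_{p_j}-\Phi_\infty)\,d\nu_{p_j}+\Big(\int\Phi_\infty\,d\nu_{p_j}-\int\Phi_\infty\,d\nu_\infty\Big).
\]
The first term is bounded by $\|\Phi_{p_j}-\Phi_\infty\|_{C^0}\,\|\nu_{p_j}\|(\overline\Omega)\le\|\Phi_{p_j}-\Phi_\infty\|_{C^0}\to0$, using the mass bound from Lemma~\ref{lemma7}. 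The second term tends to zero by weak$^*$ convergence. Finally, $\Lambda_{p_j}\to\Lambda_\infty$ by Proposition~\ref{proposition6}, and the product of a convergent scalar with a convergent integral converges, which yields the limiting identity.

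\textbf{Main obstacle.} The key point is that the right-hand side is a product of weakly$^*$ convergent measures with integrands that themselves depend on $p$. This is handled by the uniform convergence of $\Phi_{p_j}$, which rests on the $C^{k-1}$ convergence of $u_p$. That convergence is available only because the constraint involves derivatives up to order $k-1$ and not $D^ku$.

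There is also a minor technical check. For the test functions $\phi$ we must verify that $\phi\in W_B^{k,\alpha p}$ and that the trace conditions hold. Both follow from $C^k(\overline\Omega)\cap W_B^{k,\infty}\subseteq W_B^{k,\alpha p}$, since $\Omega$ is bounded.
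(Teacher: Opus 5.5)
Your proposal is correct and follows the paper's own proof. You rewrite \eqref{1.11} in measure form using $\lambda_p=\Lambda_p^p$ and \eqref{1.12}, then pass to the limit along $p_j$ using $\Lambda_{p_j}\to\Lambda_\infty$, uniform convergence of the $\partial g$-terms from the $C^{k-1}$ convergence of $u_{p_j}$, and weak$^*$ convergence of $\mathbf M_{p_j}$ and $\nu_{p_j}$. Your explicit splitting with the mass bound $\|\nu_{p_j}\|(\overline\Omega)\le 1$ simply spells out what the paper calls the weak$^*$--strong continuity of the duality pairing.
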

\begin{proof}
Fix a test map $\phi\in C_B^k(\overline\Omega;\mathbb R^N)$ and let
$p>(n/\alpha)+2$. By the definition of the measures $\mathbf M_p$ and $\nu_p$ in
\eqref{1.12}, the Euler--Lagrange system \eqref{1.11} can be written in the
measure form
\begin{align*}
\int_{\overline{\Omega}} D^k\phi : d\mathbf M_p
&=
\Lambda_p
\int_{\overline{\Omega}}
\bigg(
\partial_\eta g_p\cdot\phi
+
\sum_{j=1}^{k-1}\partial_{P_j} g_p:D^j\phi
\bigg)\,d\nu_p .
\end{align*}

By Proposition~\ref{proposition6}, along the subsequence $(p_j)$ we have
\[
\Lambda_{p_j}\to\Lambda_\infty,
\qquad
\big(u_{p_j},Du_{p_j},\ldots,D^{k-1}u_{p_j}\big)
\to
\big(u_\infty,Du_\infty,\ldots,D^{k-1}u_\infty\big)
\]
uniformly on $\overline\Omega$, as $j\to\infty$. Since $g\in C^1$, it follows that
\[
\partial_\eta g_{p_j}
\to
\partial_\eta g_\infty,
\qquad
\partial_{P_j} g_{p_j}
\to
\partial_{P_j} g_\infty
\quad (j=1,\ldots,k-1),
\]
uniformly on $\overline\Omega$, as $j\to\infty$. Moreover, by Lemma~\ref{lemma7},
\[
\nu_{p_j}\stackrel{*}{\rightharpoonup}\nu_\infty,
\qquad
\mathbf M_{p_j}\stackrel{*}{\rightharpoonup}\mathbf M_\infty
\]
in the sense of Radon measures. Passing to the limit in the weak formulation above and using the weak*--strong continuity of the duality pairing
\[
\mathcal M(\overline\Omega)\times C(\overline\Omega)\to\mathbb R,
\]
we obtain the claimed identity. This proves the claim.
\end{proof}

\subsubsection*{Declarations.} The authors have no conflict of interest to declare. There are no data in any way associated with this work. No AI assistance has been utilised in the preparation of this work.


\bibliography{references}
\bibliographystyle{abbrvnat}

\end{document}